\newtheorem{proposition}{Proposition}[section]
\newtheorem{theorem}{Theorem}[section]
\newtheorem{lemma}{Lemma}[section]
\newtheorem{remark}{Remark}[section]
\newtheorem{example}{Example}[section]
\date{ }
\begin{document}

\title{A system of Schr\"odinger's problems and functional equations
}

\author{Toshio Mikami\thanks{Department of Mathematics, Tsuda University. Partially supported by JSPS KAKENHI Grant Number 24K06765}\and 
Jin Feng\thanks{Department of Mathematics, The University of Kansas}
}

\maketitle

\begin{abstract}
We propose and study a system of Schr\"odinger's problems and functional equations in probability theory.
More precisely, we consider a system of variational problems of relative entropies for probability measures 
 on a Euclidean space  with given two endpoint marginals,
which can be defined inductively.
We also consider an inductively defined system of functional equations, which are Euler's equations for our variational problems.
These are generalizations of Schr\"odinger's problem and functional equation. 
We prove the existence and uniqueness of solutions to our functional equations, 
from which we show the existence and uniqueness of a minimizer of our variational problem. 
Our problem gives an approach for a stochastic optimal transport analog of the Knothe--Rosenblatt rearrangement
 via a variational problem point of view.
 \end{abstract}

Keywords:  Schr\"odinger's problem,  Schr\"odinger's functional equation, Knothe--Rosenblatt rearrangement, stochastic optimal transport

AMS subject classifications:  49Q22, 93E20

\section{Introduction}\label{sec:1}

For a distribution function $F$ on $\mathbb{R}$, a function defined in the following is called the quasi--inverse of $F$:
\begin{equation}\label{1.1}
F^{-1} (u):=\inf\{ x\in \mathbb{R} |u\le F(x)\},\quad 0<u< 1 
\end{equation}
(see e.g., \cite{Nelsen, R1}).
For a metric space $S$, let $\mathcal{P} (S)$ denote the set of all  Borel probability measures on $S$ with weak topology.
For $k=1,\cdots, d$, and $x=(x_j)_{ j=1}^k\in \mathbb{R}^k$, let
\begin{equation}\label{1.2.0}
{\bf x}_i:= (x_j)_{j=1}^i\in \mathbb{R}^i,\quad 1\le i\le k.
\end{equation}
For $d\ge 2, P_i\in \mathcal{P} (\mathbb{R}^d ), i=0,1$ and $k=2,\cdots, d$, 
let $P_i (\cdot |{\bf x}_{k-1})$ denote the regular conditional probability of $P_i $ given ${\bf x}_{k-1}$.
For ${\bf x}_k\in\mathbb{R}^k$, let
\begin{align}\label{1.3.0}
F_{i,k}(x_k|{\bf x}_{k-1}):=&
\begin{cases}  
P_i ((-\infty, x_1]\times \mathbb{R}^{d-1}),&k=1,\\
P_i ((-\infty, x_k]\times \mathbb{R}^{d-k}|{\bf x}_{k-1}), &1<k<d,\\
P_i ((-\infty, x_k]|{\bf x}_{k-1}), &k=d,
\end{cases}\\
T_{k}({\bf x}_k):=&
F_{1,k}(\cdot |(T_1 ({\bf x}_1),\cdots, T_{k-1} ({\bf x}_{k-1})))^{-1}(F_{0,k}(x_k |{\bf x}_{k-1})),\notag\\
T^{KR}_{k}({\bf x}_k):=&(T_{1}({\bf x}_1), \cdots, T_{k}({\bf x}_{k})),\quad 1\le k\le d.\label{1.3.1}
\end{align}
$T^{KR}_d$ is called the Knothe--Rosenblatt rearrangement and plays a crucial role in many fields, e.g., the log--Sobolev inequality, the Brunn--Minkowski inequality, the transportation cost inequality, statistics, and physics (see \cite{Bo1,Bo2, C, KT, K2, L1, R2,T} and the references therein, and also \cite{M2010}).

If $\{F_{0,k}(\cdot |{\bf x}_{k-1})\}_{k=1}^d$ are continuous $P_0$--a.s., then 
$$P_1=P_0(T^{KR}_{d})^{-1}.$$
Let $\delta_x (dy)$ denote  the delta measure on $\{ x\}$ and $p\ge 1$.
$P_0 (dx_1\times \mathbb{R}^{d-1})\delta_{T_{1}(x_1)} (dy_1)$ is a (unique if $p>1$)  minimizer of the following Monge--Kantorovich problem:
\begin{align}\label{1.5}
&\inf\biggl\{\int_{\mathbb{R}\times\mathbb{R}}
|y_1-x_1|^p\pi(dx_1\hbox{ }dy_1):\pi\in \mathcal{P} (\mathbb{R}\times \mathbb{R}),
\\
&\qquad \pi(dx_1\times \mathbb{R})=P_0 (dx_1\times \mathbb{R}^{d-1}),  
\pi(\mathbb{R}\times dy_1)=P_1 (dy_1\times \mathbb{R}^{d-1})\biggr\},\notag
\end{align}
provided it is finite.
For $k=2,\cdots, d$,
$P_0 (d{\bf x}_k\times \mathbb{R}^{d-k})\delta_{T^{KR}_{k}({\bf x}_k)} (d{\bf y}_k)$ is a (unique if $p>1$)  minimizer of the following:
\begin{align} 
&\inf\biggl\{\int_{\mathbb{R}^k\times\mathbb{R}^k}
|y_k-x_k|^p\pi(d{\bf x}_k\hbox{ }d{\bf y}_k):\pi\in \mathcal{P} (\mathbb{R}^k\times \mathbb{R}^k ),\label{1.3}\\
&\qquad\pi(d{\bf x}_k\times \mathbb{R}^{k})=P_0 (d{\bf x}_k\times \mathbb{R}^{d-k}),  
\pi(\mathbb{R}^{k}\times d{\bf y}_k)=P_1 (d{\bf y}_k\times \mathbb{R}^{d-k}),\notag\\
&\qquad \pi(d{\bf x}_{k-1}\times \mathbb{R}\times d{\bf y}_{k-1}\times \mathbb{R})
=P_0 (d{\bf x}_{k-1}\times \mathbb{R}^{d-(k-1)})\delta_{T^{KR}_{k-1}({\bf x}_{k-1})} (d{\bf y}_{k-1})\biggr\},\notag
\end{align}
provided it is finite
(see \cite{D} and also e.g., \cite{R1, V1}), where $d{\bf x}_d\times \mathbb{R}^{0}$ denotes $d{\bf x}_d$.

In \cite{C}, they gave a sequence of minimizers of a class of Monge--Kantorovich problems
that approximates the Knothe--Rosenblatt rearrangement
{\color{black}(see \cite{BPP} for  recent development of the Knothe--Rosenblatt rearrangement).}
Its stochastic optimal transport analog {\color{black}that is called the Knothe--Rosenblatt process}
was discussed in \cite{M2012} (see also \cite{M2010}).
Unlike the Knothe--Rosenblatt rearrangement, no existence theorem of the Knothe--Rosenblatt process exists even though there exist examples.

In this paper, we give an alternative approach for a stochastic optimal transport analog of the Knothe--Rosenblatt rearrangement via 
{\color {black}a system of variational problems of relative entropies for probability measures on a Euclidean space  with given two endpoint marginals.

We describe B. Jamison's results \cite{1-Jamison1974, J75} and explain our problem more precisely.

\begin{theorem}[see \cite{1-Jamison1974}, Theorem 3.2]\label{thm1.1}
Suppose that $S$ is a $\sigma$--compact metric space, that ${\bf m}_1,{\bf m}_2\in \mathcal{P}(S)$, and that $q\in C(S\times S;(0,\infty))$.
Then there exists a unique pair $({\bf m}(dx\hbox{ }dy),{\bf n}_1(dx){\bf n}_2(dy))$ of 
a Borel probability measure and a product $\sigma$--finite measures on $S\times S$ for which 
the following holds:
\begin{align}\label{1.7.0}
{\bf m}(dx\times S)={\bf m}_1(dx), &\quad {\bf m}(S\times dy)={\bf m}_2(dy),\\
{\bf m}(dx\hbox{ }dy)=&q(x,y){\bf n}_1(dx){\bf n}_2(dy).\notag
\end{align}
\end{theorem}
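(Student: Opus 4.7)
The plan is to prove Theorem \ref{thm1.1} by recasting the existence of $({\bf m},{\bf n}_1\otimes{\bf n}_2)$ as a relative-entropy minimization, which fits the Schr\"odinger-problem viewpoint of the paper. First I would build a convenient $\sigma$-finite reference on $S\times S$: pick $\mu\in\mathcal{P}(S)$ with $\mu\gg {\bf m}_1+{\bf m}_2$ (e.g.\ $\mu=({\bf m}_1+{\bf m}_2)/2$), use the $\sigma$-compactness to write $S=\bigcup_n K_n$ with $K_n$ compact and increasing, and set $R(dx\,dy):=q(x,y)\,\mu(dx)\mu(dy)$. Since $q\in C(S\times S;(0,\infty))$, $R$ is locally finite and $\sigma$-finite, and $R$ is equivalent to $\mu\otimes\mu$ on each $K_n\times K_n$.

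Next, I would minimize the relative entropy $H(\pi\,|\,R)$ over the Fr\'echet class $\Pi({\bf m}_1,{\bf m}_2)$ of probability measures on $S\times S$ with marginals ${\bf m}_1,{\bf m}_2$. The feasibility of some $\pi_0$ with $H(\pi_0|R)<\infty$ would be established by truncation: on $K_n\times K_n$ the function $q$ is bounded, so one constructs a compactly supported $\pi_0$ with bounded $d\pi_0/dR$, then passes to the limit along $n$. Existence of a minimizer $\pi^*$ follows from lower semicontinuity of $H(\cdot|R)$ and tightness of $\Pi({\bf m}_1,{\bf m}_2)$. Strict convexity of $H(\cdot|R)$ gives uniqueness of $\pi^*$. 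The standard $I$-projection / Csisz\'ar-type argument then forces the Radon--Nikodym derivative to have product form, $d\pi^*/dR=\tilde f(x)\tilde g(y)$ for Borel $\tilde f,\tilde g\ge 0$; defining ${\bf n}_1(dx):=\tilde f(x)\mu(dx)$ and ${\bf n}_2(dy):=\tilde g(y)\mu(dy)$ gives ${\bf m}:=\pi^*$ and the desired factorization ${\bf m}(dx\,dy)=q(x,y){\bf n}_1(dx){\bf n}_2(dy)$, with the two marginal identities built in by the variational constraint.

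For uniqueness of the pair $({\bf m},{\bf n}_1\otimes{\bf n}_2)$, strict convexity already pins down ${\bf m}$; since $q>0$ pointwise, the identity ${\bf m}=q\cdot({\bf n}_1\otimes{\bf n}_2)$ determines the product measure ${\bf n}_1\otimes{\bf n}_2$ uniquely as a $\sigma$-finite measure on $S\times S$ (even though the individual factors are only unique up to the rescaling ${\bf n}_1\mapsto c{\bf n}_1$, ${\bf n}_2\mapsto c^{-1}{\bf n}_2$). The $\sigma$-finiteness of the product itself is inherited from its local finiteness on $K_n\times K_n$, which follows from the boundedness of $1/q$ on compacts. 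The main obstacle is the $\sigma$-compact (rather than compact) setting: one must verify that the feasible class contains an element of finite relative entropy against the globally infinite reference $R$, and that the product factorization of the minimizer, typically obtained on each $K_n\times K_n$, can be patched into globally Borel $\tilde f,\tilde g$ with the correct scaling between consecutive $K_n$'s. Once these measure-theoretic points are settled, the rest of the argument is structural.
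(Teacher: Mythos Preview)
The paper does not give its own proof of Theorem~\ref{thm1.1}; the result is quoted from Jamison~\cite{1-Jamison1974} and used as a black box, so there is nothing in the paper to compare your argument against beyond that citation. That said, your entropy-minimization route has a genuine gap relative to the stated hypotheses.

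The $I$-projection approach you outline (cf.\ \cite{Csiszar,RT93}) does produce the factorization \emph{when} $\inf\{H(\pi\mid R):\pi\in\mathcal{A}({\bf m}_1,{\bf m}_2)\}<\infty$, but Theorem~\ref{thm1.1} does not assume this, and it can fail. Take $S=\mathbb{R}$, ${\bf m}_1={\bf m}_2=\mu$ standard Gaussian, and $q(x,y)=c\exp(-e^{x^2+y^2})\in C(\mathbb{R}^2;(0,\infty))$, with $c>0$ normalizing $R=q\,\mu\otimes\mu$ to a probability. For any $\pi\in\mathcal{A}({\bf m}_1,{\bf m}_2)$ the Donsker--Varadhan variational bound with test function $\varphi_M(x,y)=\min(e^{x^2+y^2},M)$ gives $H(\pi\mid R)\ge\int\varphi_M\,d\pi-\log c\ge\int\min(e^{x^2},M)\,{\bf m}_1(dx)-\log c\to+\infty$ as $M\to\infty$. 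Thus every feasible $\pi$ has infinite entropy, no minimizer exists, and strict convexity pins down nothing---yet Jamison's theorem still delivers the unique $({\bf m},{\bf n}_1\otimes{\bf n}_2)$. Your truncation/limit construction of a finite-entropy $\pi_0$ cannot rescue this: a compactly supported coupling has the wrong marginals, and passing to the limit along $K_n\times K_n$ does not retain a finite entropy bound. The paper itself flags exactly this distinction, remarking just before Section~\ref{sec:2} that the SFE~\eqref{1.19} has a solution ``even when it is infinite (see~\cite{1-Jamison1974})''. Jamison's original argument bypasses entropy entirely, proceeding by a direct fixed-point/iterative construction (cf.\ Fortet~\cite{1-Fo} and the Sinkhorn discussion in~\cite{CGT}).
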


\begin{remark}
(\ref{1.7.0}) is equivalent to the following (see e.g., \cite{M2021} for more discussion):
solve the following equation for ${\bf n}_2(dy)$:
\begin{align*}
{\bf m}_2(dy)=&\left\{\int_S \frac{q(x,y)}{\int_S q(x,\overline y){\bf n}_2(d\overline y)}{\bf m}_1(dx)\right\}{\bf n}_2(dy), 
\end{align*}
and define ${\bf m}(dx\hbox{ }dy)$ by the following:
\begin{align*}
{\bf m}(dx\hbox{ }dy)=&q(x,y)\frac{1}{\int_S q(x,\overline y){\bf n}_2(d\overline y)}{\bf m}_1(dx){\bf n}_2(dy).
\end{align*}
\end{remark}

We describe the assumption and the theorem  in \cite{J75}.

\noindent
(H) $\sigma(t,x)=(\sigma^{ij}(t,x))_{i,j=1}^d$, $(t,x)\in [0,1]\times \mathbb{R}^d$, is a $d\times d$-matrix.
$a(t,x):=\sigma(t,x)\sigma(t,x)^*$, $(t,x)\in [0,1]\times \mathbb{R}^d$, is uniformly nondegenerate,
bounded, once continuously differentiable, and uniformly H\"older continuous, where $\sigma^*$ denotes the transpose of $\sigma$.
$D_x a(t,x)$ is bounded and the first derivatives of $a(t,x)$ are uniformly H\"older continuous
in $x$ uniformly in $t\in [0,1]$.
$b(t,x):[0,1]\times \mathbb{R}^d\longrightarrow \mathbb{R}^d$ is bounded, continuous, and uniformly H\"older continuous in $x$ uniformly in $t\in [0,1]$.

\begin{theorem}[see \cite{J75}]\label{thm1.3}
Suppose that (H) holds.
Then the following stochastic differential equation (SDE for short) has a unique weak solution with a positive continuous transition probability density $p(s,x;t,y)$, $0\le s<t\le 1$, $x,y\in \mathbb{R}^d$:
\begin{eqnarray}\label{1.7}
dX(t)&=&b(t,X(t))dt+\sigma (t,X(t))dB(t),\quad 0< t<1
\end{eqnarray}
where $\{B(t)\}_{0\le t\le 1}$ denotes a Brownian motion.
\end{theorem}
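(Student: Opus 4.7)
The plan is to construct the transition density $p(s,x;t,y)$ first via classical parabolic PDE theory, and then use $p$ to build the weak solution and establish its uniqueness. Specifically, I would build the fundamental solution of the backward Kolmogorov operator
\[
\partial_s + \frac{1}{2}\sum_{i,j=1}^d a^{ij}(s,x)\partial_{x_ix_j} + \sum_{i=1}^d b^i(s,x)\partial_{x_i}
\]
by Levi's parametrix method, as developed in Friedman's treatise on parabolic equations. The assumptions in (H)---uniform ellipticity, boundedness of $a$, $D_xa$, $b$, and uniform H\"older continuity in $x$ of these coefficients---are precisely those required for the parametrix expansion to converge and yield a jointly continuous fundamental solution satisfying the Chapman--Kolmogorov identity.

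Concretely, one freezes the diffusion matrix $a$ at the endpoint $y$ and takes $Z(s,x;t,y)$ to be the Gaussian kernel associated with the resulting constant-coefficient heat equation. One then writes
\[
p(s,x;t,y) = Z(s,x;t,y)+\int_s^t\!\!\int_{\mathbb{R}^d} Z(s,x;\tau,z)\Phi(\tau,z;t,y)\,dz\,d\tau,
\]
where $\Phi$ solves a Volterra integral equation obtained by applying the operator to $p$ and demanding the result vanish in the interior. Iterating this Volterra equation produces $\Phi$ as a convergent series; the H\"older regularity of coefficients is what bounds each iterate and ensures absolute convergence. Once $p$ is constructed, classical Aronson-type two-sided Gaussian bounds imply that $p(s,x;t,y)>0$ for all $0\le s<t\le 1$ and $x,y\in \mathbb{R}^d$, and joint continuity of $p$ follows from the integral representation.

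With $p$ in hand, Kolmogorov's extension theorem produces a Markov measure on path space with any prescribed initial law. Either by direct It\^o calculus using $p$, or by verifying the martingale problem for the generator $L_t$ acting on $C^2_c$ test functions, this measure yields a weak solution of \eqref{1.7}. Uniqueness of the weak solution then follows from Stroock--Varadhan's uniqueness theorem for the martingale problem under uniformly elliptic, bounded, and continuous coefficients, conditions that are strictly weaker than (H).

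The principal technical hurdle is the convergence analysis of the parametrix series: differentiating $Z$ twice in $x$ produces a singularity of order $(t-s)^{-1}$, and it is only the H\"older continuity of $a$ that converts this into an integrable singularity so the Volterra iteration works. This is exactly where the H\"older conditions on $a$ and $D_x a$ in (H) are used in full strength; once Friedman's theorems are invoked, the remaining assembly into a Markov process and the application of Stroock--Varadhan uniqueness are routine.
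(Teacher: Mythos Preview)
Your outline is a correct and standard route to this classical result, but note that the paper itself does not prove Theorem~\ref{thm1.3}: it is quoted from Jamison~\cite{J75} as background, with no argument given beyond the citation. There is therefore nothing in the paper to compare your approach against.

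That said, your sketch is essentially the approach that underlies the literature Jamison draws on. The parametrix construction of the fundamental solution under (H) is exactly Friedman's theory, and the identification of H\"older regularity of $a$ (and of $b$) as the mechanism that tames the $(t-s)^{-1}$ singularity in the Volterra iteration is the right diagnosis. Aronson-type two-sided Gaussian bounds give strict positivity, and Stroock--Varadhan's well-posedness of the martingale problem under bounded, continuous, uniformly elliptic coefficients yields weak uniqueness. One minor remark: for the bare existence and positivity of the fundamental solution you do not actually need the extra assumption in (H) on $D_x a$; H\"older continuity of $a$ and $b$ in $x$ suffices for Friedman's construction. The differentiability of $a$ is used elsewhere in Jamison's program (e.g., for the $h$-path process and the regularity of $D_x\log h$), not for Theorem~\ref{thm1.3} itself.
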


In this paper, we do not fix probability space and we use the same notations $P$ and $B$ for possibly different probabilities and Brownian motions, respectively, when it is not confusing.

Suppose that (H) holds.
For $P_0(dx), P_1(dx)=p_1(x)dx\in \mathcal{P}(\mathbb{R}^d)$,
apply Theorem \ref{thm1.1} for $(S, {\bf m}_1,{\bf m}_2, q(x,y))=(\mathbb{R}^d, P_0, P_1, p(0,x;1,y))$.
Then, from Theorem \ref{thm1.3}, there exists a solution $h(1,\cdot)$ that is unique up to a multiplicative constant to the following Schr\"odinger's functional equation (SFE for short):
\begin{equation}\label{1.4}
P_1(dy)=h(1,y)dy\int_{ \mathbb{R}^d}\frac{p(0,x;1,y)}{
\int_{\mathbb{R}^d}h(1,z)p(0,x;1,z)dz}P_0(dx)
\end{equation}
(see also \cite{B32, Chen, Csiszar,M2021, RT93,S1,S2} and the references therein).

Let
\begin{align}\label{1.10}
h(t,x):=&\int_{\mathbb{R}^d}h(1,z)p(t,x;1,z)dz,\quad (t,x)\in [0,1)\times \mathbb{R}^d.
\end{align}
Then the following is known.
\begin{theorem}[see \cite{J75}]
Suppose that (H) holds.
Then there exists a unique weak solution to the following SDE
that is called the h-path process or Markovian reciprocal process for Brownian motion with initial and terminal distributions $P_0$ and $P_1$, respectively: for $t\in (0,1)$,
\begin{align}
dX_o(t)=&\{a(t, X_o(t))D_x\log  h(t,X_o(t))+b(t, X_o(t))\}dt
+\sigma(t, X_o(t))dB(t),\\
&P(X_o(0),X_o(1))^{-1}(dx\hbox{ }dy)=\frac{h(1,y)}{h(0,x)}P_0(dx)p(0,x;1,y)dy.\label{1.10.1}
\end{align}
In particular, from (\ref{1.4})--(\ref{1.10}),
\begin{equation}
PX_o(t)^{-1}=P_t, \quad t=0,1.\label{1.11.1}
\end{equation}
\end{theorem}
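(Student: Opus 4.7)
The plan is to construct $X_o$ via Doob's $h$-transform and then read off both the SDE representation and the marginal constraints. Under (H), Theorem~\ref{thm1.3} provides a positive continuous transition density $p(s,x;t,y)$ satisfying the backward Kolmogorov equation $(\partial_s + L_s^x)p(s,x;t,y)=0$, where $L_s = \tfrac{1}{2}a^{ij}(s,x)\partial_{ij} + b^i(s,x)\partial_i$. My first step is to differentiate (\ref{1.10}) under the integral sign, using the regularity of $p$ from standard parabolic Schauder theory together with the H\"older and boundedness hypotheses in (H), to show that $h$ is strictly positive and of class $C^{1,2}$ on $[0,1)\times\mathbb{R}^d$, with $(\partial_t + L)h = 0$ there. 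I then set
\[
p^h(s,x;t,y) := \frac{h(t,y)}{h(s,x)}\, p(s,x;t,y),\qquad 0\le s<t<1,
\]
and verify Chapman--Kolmogorov directly from the corresponding identity for $p$. Kolmogorov's extension theorem, together with the initial law $P_0$, yields a Markov process $X_o$ on $[0,1)$ with this transition function.

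Next I would identify $X_o$ as a weak solution of the stated SDE. The pointwise identity $L(hf) = hLf + fLh + (a\nabla h)\cdot\nabla f$ together with $(\partial_t + L)h = 0$ shows that the generator of $X_o$ on $C^2_c$ test functions acts as
\[
L^h f \;=\; Lf + (a\,\nabla_x \log h)\cdot\nabla f,
\]
which is exactly the generator associated with the target SDE. On any subinterval $[0,1-\varepsilon]$ the coefficients of $L^h$ are locally smooth with locally bounded derivatives, so by the standard equivalence between the well-posed martingale problem for $L^h$ and weak solutions of the corresponding SDE, $X_o$ restricted to $[0,1-\varepsilon]$ is such a weak solution with continuous paths. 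From the definition of $p^h$, the joint law of $(X_o(0),X_o(1))$ is $P_0(dx)\,p^h(0,x;1,y)\,dy$, which is (\ref{1.10.1}); marginalizing in $x$ and invoking the Schr\"odinger equation (\ref{1.4}) (together with the identity $h(0,x)=\int h(1,z) p(0,x;1,z)\,dz$) gives $PX_o(1)^{-1}=P_1$, which combined with $PX_o(0)^{-1}=P_0$ is (\ref{1.11.1}). Path continuity plus convergence of the one-dimensional marginals as $t\uparrow 1$ then extends $X_o$ continuously to $[0,1]$.

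For uniqueness, any weak solution $Y$ of the SDE with $Y(0)\sim P_0$ solves the martingale problem for $L^h$ on $[0,1)$; well-posedness on each $[0,1-\varepsilon]$ determines all finite-dimensional distributions of $Y$ on $[0,1)$, forcing $Y$ to agree in law with $X_o$ on $[0,1)$, and by path continuity on all of $[0,1]$. The main obstacle is the possible blow-up of $\nabla_x\log h$ as $t\to 1$, which precludes a direct Lipschitz-type existence/uniqueness argument on the whole interval. The $h$-transform viewpoint circumvents this because one works with the transition density $p^h$ --- which is intrinsically defined and well-behaved up to the terminal boundary thanks to the regularity of $p$ and the nonnegativity of $h(1,\cdot)$ --- rather than with pathwise estimates on the irregular drift; the Schr\"odinger equation (\ref{1.4}) is then precisely the compatibility condition enforcing the correct terminal marginal $P_1$.
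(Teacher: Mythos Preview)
The paper does not supply its own proof of this statement: the theorem is quoted verbatim as a known result from Jamison~\cite{J75} (note the attribution ``see~\cite{J75}'' in the theorem header), and is used only as background to motivate the variational formulation~(\ref{1.121})--(\ref{1.151}). So there is nothing in the paper to compare your argument against.

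That said, your sketch is the standard Doob $h$-transform construction and is essentially how Jamison proceeds. The outline is sound: define $p^h$, verify Chapman--Kolmogorov, identify the generator $L^h = L + a\nabla\log h\cdot\nabla$, and use well-posedness of the martingale problem on $[0,1-\varepsilon]$ to get existence and uniqueness of the weak solution. One point to be careful about is the passage to $t=1$: you assert that $p^h(0,x;1,y)$ is ``intrinsically defined and well-behaved up to the terminal boundary,'' but $h(1,\cdot)$ from~(\ref{1.4}) need only be nonnegative and measurable (it is not a priori continuous or bounded away from zero), so the extension of $X_o$ to $t=1$ and the verification of~(\ref{1.10.1}) require a separate argument (e.g., tightness of the laws on $C([0,1];\mathbb{R}^d)$, or a direct computation of the joint law via $\lim_{t\uparrow 1}p^h(0,x;t,y)$ in a weak sense). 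This is handled in~\cite{J75}, but your sketch glosses over it.
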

}

We recall the definition of relative entropy:
for $\pi,\tilde\pi\in \mathcal{P}(\mathbb{R}^{d})$, let
\begin{equation}\label{1.14}
H(\tilde\pi\parallel\pi):=
\begin{cases}
\displaystyle\int_{\mathbb{R}^{d}}\left\{\log \frac{d\tilde\pi}{d\pi}(x)\right\}\tilde\pi(dx),&\tilde\pi\ll \pi,\\
+\infty,&{\rm otherwise}.
\end{cases}
\end{equation}
For $m,n\ge 1$ and $(P,Q)\in \mathcal{P}(\mathbb{R}^{m})\times \mathcal{P}(\mathbb{R}^{n})$,
let 
\begin{align}\label{1.11}
\mathcal{A}(P, Q):=&
\{\pi(dx\hbox{ }dy)\in \mathcal{P}(\mathbb{R}^{m}\times \mathbb{R}^{n}):\\
&\qquad \pi(dx\times \mathbb{R}^{n})=P(dx),\pi(\mathbb{R}^{m}\times dy)=Q(dy)\}.\notag
\end{align}
Here, we omit the dependence of $\mathcal{A}(P, Q)$ on $m, n$ except when it is confusing.
Then the following is known:
\begin{align}
&\inf\left\{H(\pi\parallel P_0(dx){\color{black}p(0,x;1,y)}dy):\pi\in \mathcal{A}(P_0, P_1)\right\}\label{1.121}\\
=&H(P(X_o(0),X_o(1))^{-1}\parallel P_0(dx){\color{black}p(0,x;1,y)}dy)\notag\\
=&E\left[ \frac{1}{2}\int_0^1 \left| {\color{black}\sigma(t,X_o(t))}D_x\log  h(t,X_o(t))\right|^2dt\right]\notag\\
=&\inf\left\{E\left[ \frac{1}{2}\int_0^1 |{\color{black}\sigma(t,X(t))^{-1}(b_X(t)-b(t,X(t)))}|^2dt\right]:\right.\label{1.131}\\
&\qquad P(X(0),X(1))^{-1}\in \mathcal{A}(P_0, P_1),\notag\\
&\qquad dX(t)=b_X(t)dt+{\color{black}\sigma(t,X(t))}dB(t), 0<t<1\}.\notag
\end{align}
In (\ref{1.131}), $\{X(t)\}_{0\le t\le 1}$, $\{b_X(t)\}_{0\le t\le 1}$, and $\{B(t)\}_{0\le t\le 1}$ denote a semimartingale,  a progressively measurable stochastic process, and a Brownian motion,
respectively, defined on the same filtered probability space 
(see e.g., \cite{FS06, IW14}).
This variational problem is a class of  Schr\"odinger's problem.
From (\ref{1.11.1}), the following is the minimizer of  (\ref{1.121}):
\begin{equation}\label{1.151}
\frac{h(1,y)}{h(0,x)}P_0(dx){\color{black}p(0,x;1,y)}dy,
\end{equation}
provided (\ref{1.121}) is finite (see \cite{Csiszar,D91, F88,M2021,MT06,RT93,Z86,Z86-2} and references therein and also
(\ref{1.4})--(\ref{1.10}) for notation).
We also call (\ref{1.4}) Schr\"odinger's functional equation for (\ref{1.121}).

{\color{black}
\begin{remark}
If $H(\pi\parallel P_0(dx){\color{black}p(0,x;1,y)}dy)$ is finite and $\pi\in \mathcal{A}(P_0, P_1)$, then
$P_1(dy)\ll dy$.
In particular, ${\bf n}_2 (dy)\ll dy$ in Theorem \ref{thm1.1} .
\end{remark}
}

The Knothe--Rosenblatt process can be defined by generalizing (\ref{1.131}) as a stochastic optimal transport analog of (\ref{1.3}) (see \cite{M2012}).
In this paper, we generalize (\ref{1.121}) as an analog of (\ref{1.3}) and study a new class of functional equations for our variational problem (see (\ref{1.17})--(\ref{1.18}), (\ref{1.19}), and (\ref{1.210})).

We describe notations, and a system of functional equations and variational problems that generalize 
(\ref{1.4}) and (\ref{1.121}).
Let  $d_i\ge 1$ and 
\begin{equation}\label{1.15}
n_i:=\sum_{j=1}^i d_j,\quad  i\ge 1.
\end{equation}
Suppose that there exists $k_0\ge 2$ such that $n_{k_0}=d$.
Let $\{p(x,y)dy\}_{x\in\mathbb{R}^{d}}\subset\mathcal{P}(\mathbb{R}^{d})$.
Suppose that the integral in (\ref{1.16}) below
does not depend on $(x_j)_{j=n_i+1}^d$ (see (A0, i) in section \ref{sec:2} and also Remark  \ref{rk2.1}, (i) in section \ref{sec:2} for a typical example): let
\begin{align}\label{1.16}
&p_{i}({\bf x}_{n_i},{\bf y}_{n_i})\\
:=&
\begin{cases}
\displaystyle
\int_{\mathbb{R}^{d-n_i}}p(x,({\bf y}_{n_i},y))dy,& x=(x_i)_{i=1}^d\in \mathbb{R}^d,
{\bf y}_{n_i}\in \mathbb{R}^{n_i}, 1\le i< k_0,\\
p({\bf x}_{n_{k_0}},{\bf y}_{n_{k_0}}),&{\bf x}_{n_{k_0}},{\bf y}_{n_{k_0}}\in \mathbb{R}^{n_{k_0}}=\mathbb{R}^d, i=k_0.
\end{cases}\notag
\end{align}
For $i=2,\cdots, k_0$, and ${\bf x}_{n_i}, {\bf y}_{n_i}=(y_j)_{j=1}^{n_i}\in \mathbb{R}^{n_i}$, let 
\begin{align}\label{1.170}
{\bf y}_{[n_{i-1}+1,n_i]}:=&(y_j)_{j=n_{i-1}+1}^{n_i},\\
p_{i}({\bf x}_{n_i},{\bf y}_{[n_{i-1}+1,n_i]}|{\bf y}_{n_{i-1}}):=&\frac{p_{i}({\bf x}_{n_i},{\bf y}_{n_i})}
{\int_{\mathbb{R}^{d_i}} p_{i}({\bf x}_{n_i},({\bf y}_{n_{i-1}},z ))dz }. \notag
\end{align}
{\color{black}We use simpler notations such as $x,y$ instead of ${\bf x}_{n_i}, {\bf y}_{[n_{i-1}+1,n_i]}$, etc. when it is not confusing.}
Notice that {\color{black}${\bf y}_{n_i}=({\bf y}_{n_{i-1}}, {\bf y}_{[n_{i-1}+1,n_i]})$
and that $
p_{i}({\bf x}_{n_i},\cdot |{\bf y}_{n_{i-1}})$ is a probability density function on $\mathbb{R}^{d_i}$.} 
For $\mu\in \mathcal{P}(\mathbb{R}^{d})$, let
\begin{equation}\label{1.21.0}
\mu_i(d{\bf x}_{n_i}):=
\begin{cases}
\mu(d{\bf x}_{n_i}\times \mathbb{R}^{d-n_i}),&1\le i< k_0,\\
\mu(d{\bf x}_{n_{k_0}}),&i=k_0.
\end{cases}
\end{equation}
For $\mu,\nu\in \mathcal{P}(\mathbb{R}^{d}),i\ge 2$, and $\tilde \pi\in\mathcal{P}({\mathbb{R}^{n_{i-1}}\times \mathbb{R}^{n_{i-1}}})$, let
\begin{equation}\label{2.8}
\tilde \pi\otimes\mu_{i |  i-1}(d{\bf x}_{n_i}\hbox{ }d{\bf y}_{n_{i-1}})
:=\tilde \pi(d{\bf x}_{n_{i-1}}\hbox{ }d{\bf y}_{n_{i-1}})\mu_i(d{\bf x}_{[n_{i-1}+1,n_i]}|{\bf x}_{n_{i-1}}),
\end{equation}
\begin{align}\label{1.21.1221}
&\mathcal{A}(\mu_i,\nu_i;\tilde \pi)\\
:=&
\{\pi\in\mathcal{A}(\mu_i,\nu_i):
\pi (d{\bf x}_{n_i}\hbox{ }d{\bf y}_{n_{i-1}}\times \mathbb{R}^{d_i})=
\tilde \pi\otimes\mu_{i |  i-1}(d{\bf x}_{n_i}\hbox{ }d{\bf y}_{n_{i-1}})\}\notag
\end{align}
 (see (\ref{1.11}) for notation), and
 \begin{align}
V_i(\mu_i,\nu_i;\tilde \pi)
:=
&\inf\{H(\pi\parallel\tilde \pi\otimes \mu_{i |  i-1}(d{\bf x}_{n_i}\hbox{ }d{\bf y}_{n_{i-1}})p_{i}({\bf x}_{n_i},{\bf y}_{[n_{i-1}+1,n_i]}|{\bf y}_{n_{i-1}})\\
&\qquad \times 
d{\bf y}_{[n_{i-1}+1,n_i]}):
\pi\in \mathcal{A}(\mu_i,\nu_i;\tilde \pi)\}.\notag
\end{align}

The following is our system of variational problems: 
 for $i=1,\cdots, k_0$,
\begin{align}
V_1(\mu_1,\nu_1):=&
\inf\{H(\pi\parallel\mu_1(d{\bf x}_{n_1})p_{1}({\bf x}_{n_1},{\bf y}_{n_1})d{\bf y}_{n_1}):\pi\in \mathcal{A}(\mu_1,\nu_1)\},\quad i=1,\label{1.17}\\
&V_i(\mu_i,\nu_i;\pi_{opt,i-1}), \quad 2\le i\le k_0,\label{1.18}
\end{align}
where $\pi_{opt,i}$ denotes the minimizer of the $i$th problem, provided it exists.
$V_1(\mu_1,\nu_1)$ defined in (\ref{1.17}) is a class of Schr\"odinger's problem. 

Suppose that $\nu$ has a density $f_\nu$ (see (A1) in section \ref{sec:2}).
Then $\nu_i$ has a density $f_{\nu_i}$ defined by the following:
\begin{equation}\label{1.25}
f_{\nu_i}({\bf y}_{n_i}):=\int_{\mathbb{R}^{d-n_i}}f_\nu ({\bf y}_{n_i},z)dz,\quad {\bf y}_{n_i}\in\mathbb{R}^{n_i}, 1\le i< k_0.
\end{equation}
Let $h_1$ be a solution to the following SFE:
\begin{align}\label{1.19}
f_{\nu_1}({\bf y}_{n_1})d{\bf y}_{n_1}
=&h_1({\bf y}_{n_1})d{\bf y}_{n_1}\int_{ \mathbb{R}^{d_1}}
\frac{1}{\int_{\mathbb{R}^{d_1}}h_1(z)p_{1}(x,z)dz}\mu_1(d x)
p_{1}(x,{\bf y}_{n_1})
\end{align}
(see Remark \ref{rk2.1}, (ii) in section \ref{sec:2}).
Then it is known that the measure defined in (\ref{2.3}) below
is the unique minimizer of $V_1(\mu_1,\nu_1)$, i.e., $\pi_{opt,1}$,
provided it is finite (see the references below (\ref{1.151})):
\begin{equation}\label{2.3}
\pi_{\mu_1,\nu_1}(d{\bf x}_{n_1}\hbox{ }d{\bf y}_{n_1}):=
\frac{h_1({\bf y}_{n_1})}{\int_{\mathbb{R}^{d_1}}h_1(z)
p_{1}({\bf x}_{n_1},z)dz}\mu_1(d {\bf x}_{n_1})
p_{1}({\bf x}_{n_1},{\bf y}_{n_1})d{\bf y}_{n_1}.
\end{equation}

For $i=2,\cdots,k_0$, the following is our system of functional equations for the minimizers of (\ref{1.18}):
\begin{align}
f_{\nu_i}({\bf y}_{n_i})d{\bf y}_{n_i}
=&h_i({\bf y}_{n_i})\int_{\mathbb{R}^{n_i}}
\frac{1}{\int_{\mathbb{R}^{d_i}}h_i({\bf y}_{n_{i-1}}, z)
p_{i}(x,z|{\bf y}_{n_{i-1}})dz}\pi_{0,i}(dx\hbox{ }d{\bf y}_{n_i}).\label{1.210}
\end{align}
Here $\pi_{0,i}$ is defined by the following inductively:
\begin{align}\label{2.6}
\pi_{0,i}(d{\bf x}_{n_i}\hbox{ }d{\bf y}_{n_i}):=&\pi_{\mu_{i-1},\nu_{i-1}}\otimes \mu_{i|i-1}(d{\bf x}_{n_i}\hbox{ }d{\bf y}_{n_{i-1}})\\
&\qquad\times p_{i}({\bf x}_{n_i},{\bf y}_{[n_{i-1}+1,n_i]}|{\bf y}_{n_{i-1}})d{\bf y}_{[n_{i-1}+1,n_i]},\notag\\
\pi_{\mu_i,\nu_i}(d{\bf x}_{n_i}\hbox{ }d{\bf y}_{n_i})
:=&\frac{h_i({\bf y}_{n_i})}{\int_{\mathbb{R}^{d_i}}h_i({\bf y}_{n_{i-1}}, z)
p_{i}({\bf x}_{n_i},z|{\bf y}_{n_{i-1}})dz}\pi_{0,i}(d{\bf x}_{n_i}\hbox{ }d{\bf y}_{n_i})\label{1.27}
\end{align}
(see (\ref{2.8}) for notation and (\ref{2.10.1}) in section \ref{sec:2}).

\begin{remark}\label{rk2.2}
For $i=2,\cdots,k_0, Q\in \mathcal{P}(\mathbb{R}^{n_i})$ and 
$\pi\in \mathcal{A}(\mu_i,Q;\pi_{\mu_{i-1},\nu_{i-1}})$, 
$$\pi (d{\bf x}_{n_{i-1}}\times \mathbb{R}^{d_i}\times d{\bf y}_{n_{i-1}}\times \mathbb{R}^{d_i})=\pi_{\mu_{i-1},\nu_{i-1}}(d{\bf x}_{n_{i-1}}\hbox{ }d{\bf y}_{n_{i-1}}).$$
$\pi_{0,i}\in \cup_{P\in \mathcal{P}(\mathbb{R}^{n_i})}\mathcal{A}(\mu_i,P;\pi_{\mu_{i-1},\nu_{i-1}})$,
provided (\ref{1.210}) has a solution.
\end{remark}

Even when variational problems (\ref{1.18}) are infinite, one can consider functional equations (\ref{1.210})
as there exists a (unique up to a multiplicative constant) solution to the SFE  (\ref{1.19}) for $V_1(\mu_1,\nu_1)$ even when it is infinite (see \cite{1-Jamison1974}).
In section \ref{sec:2}, we show that (\ref{1.210}) has a solution and that $\pi_{\mu_i,\nu_i}=\pi_{opt,i}$, $2\le i\le k_0$, provided (\ref{1.18}) is finite. 

{\color{black}
The zero--noise limit of (\ref{1.17}) solves Monge's problem (see \cite{M2004} and also \cite{
Leo1, Leo2, M2021} and the references therein).
R. Fortet \cite {1-Fo} solved the SFE by a successive approximation, which is called the Sinkhorn algorithm in data science nowadays (see \cite{CGT,1-PC} and the references therein).
The studies of the zero--noise limit of  (\ref{1.18}) and of an algorithm for functional equations (\ref{1.210}) are our future problem.
The duality theory for (\ref{1.18}) should be also studied.
A nice property of the Knothe--Rosenblatt rearrangement is its explicit formula (\ref{1.3.1}).
On the other hand, the role of the Knothe--Rosenblatt rearrangement in optimal transport has not been studied deeply.
We hope our result provides some insight into this  in the future.}

In section \ref{sec:2}, we state our result.
In section \ref{sec:3}, we give technical lemmas and prove our results in section \ref{sec:4}.
{\color{black}In the Appendix, we give the proofs of Example \ref{ex2.1} and Lemma \ref{lm3.1}.}

\section{Main result}\label{sec:2}

In this section, we state our results.

We describe assumptions and notations to state our results.
As of this section, $\mu,\nu\in \mathcal{P}(\mathbb{R}^{d})$.

\noindent
(A0)
(i) $p\in C(\mathbb{R}^{d}\times \mathbb{R}^{d};(0,\infty))$ and $\{p(x,y)dy\}_{x\in\mathbb{R}^{d}}\subset\mathcal{P}(\mathbb{R}^{d})$. 
$p_{i}({\bf x}_{n_i},{\bf y}_{n_i})$ does not depend on $(x_j)_{j=n_i+1}^d$ and is continuous in $({\bf x}_{n_i},{\bf y}_{n_i})\in \mathbb{R}^{n_i}\times \mathbb{R}^{n_i}$, $1\le i<k_0$ (see (\ref{1.2.0}) and (\ref{1.15})--(\ref{1.16}) for notations).

\noindent
(ii) There exists a function  $\psi_1\in C(\mathbb{R}^{d_1})$ such that for ${\bf x}_{n_1}\in \mathbb{R}^{d_1}$, 
the following is convex:
$$\mathbb{R}^{d_1}\ni  y\mapsto \log p_{1}({\bf x}_{n_1}, y)+\psi_1( y).$$

\noindent
(iii) For $i=2,\cdots,k_0$, there exists a  function $\psi_i\in C(\mathbb{R}^{d_i})$ such that for $({\bf x}_{n_i},{\bf y}_{n_{i-1}})\in \mathbb{R}^{n_i}\times \mathbb{R}^{n_{i-1}}$, 
the following is convex:
$$\mathbb{R}^{d_i}\ni y\mapsto \log p_{i}({\bf x}_{n_i},({\bf y}_{n_{i-1}},y))+\psi_i(y).$$

\noindent
(A1)
(i) $\nu$ has a probability density $f_\nu$.

\noindent
(ii) $f_{\nu_i}\in C(\mathbb{R}^{n_i})$, $1\le i\le k_0$ (see (\ref{1.21.0}) and (\ref{1.25})  for notation).

\begin{remark}\label{rk2.1}
{\color{black}(i) In (\ref{1.7}), suppose that 
$\sigma\in C^\infty_b (\mathbb{R}^{d};M(d, \mathbb{R}))$ and is uniformly nondegenerate, and $b\in C^\infty_b(\mathbb{R}^{d};\mathbb{R}^{d})$.
Then there exists $C>0$ such that for  any $x\in\mathbb{R}^{d}$, $y\mapsto \log p(0,x;1,y)+C|y|^2$ is convex (see \cite{Sheu91}).
In particular, $p(0,x;1,y)$ satisfies (A0, ii, iii).
Suppose, in addition, that $a(x)=(a_{ij}({\bf x}_{\max(i,j)}))_{i,j=1}^d$ and  
$b(x)=(b_i({\bf x}_{i}))_{i=1}^d$.
Then (A0, i) holds.}\\
(ii) 
(A0, i) implies that $p_{1}\in C(\mathbb{R}^{d_1}\times \mathbb{R}^{d_1};(0,\infty))$ and  the SFE (\ref{1.19})
has a solution $h_1$ that is unique up to a multiplicative constant  (see Theorem \ref{thm1.1} in section \ref{sec:1}).
In particular, $\pi_{\mu_1,\nu_1}\otimes \mu_{2|1}(d{\bf x}_{n_2}\hbox{ }d{\bf y}_{n_1})$ can be defined.\\
(iii) Under (A1, ii), $f_{\nu_i}^{-1}((0,\infty)), 1\le i\le k_0$ are open sets, which plays a crucial role in the proof of our result.\\
\end{remark}

We describe notations, provided $h_i:\mathbb{R}^{n_i}\rightarrow [0,\infty)$ {\color{black}exists and  
$h_i({\bf y}_{n_{i-1}},\cdot)$ is 
 measurable for ${\bf y}_{n_{i-1}}\in \mathbb{R}^{n_{i-1}}, 1\le i\le k_0$, where $({\bf y}_{n_0},z)$ denotes $z$ for $z\in \mathbb{R}^{n_1}$}
 (see (\ref{1.19}) and (\ref{1.210})--(\ref{1.27}) for notation).
For ${\bf x}_{n_i}, {\bf y}_{n_i}\in \mathbb{R}^{n_i}$, let
\begin{align}
h_i (0, {\bf x}_{n_i}, {\bf y}_{n_{i-1}})
:=&
\begin{cases}
\displaystyle h_1(0,{\bf x}_{n_1}):=
\int_{\mathbb{R}^{d_1}}h_1(z)p_1({\bf x}_{n_1},z)dz,&i=1,\\
\displaystyle \int_{\mathbb{R}^{d_i}}h_i({\bf y}_{n_{i-1}}, z)
p_{i}({\bf x}_{n_i},({\bf y}_{n_{i-1}},z))dz,&2\le i\le k_0,
\end{cases}\label{3.12}\\
\pi_{\mu_{i},\nu_{i}}({\bf y}_{n_i}, d{\bf x}_{n_i})
:=&\frac{1}{f_{\nu_{i}}( {\bf y}_{n_i})}
\prod_{j=1}^{i}\frac{h_j ({\bf y}_{n_j})p_j({\bf x}_{n_j},{\bf y}_{n_j})}{h_j (0, {\bf x}_{n_j}, {\bf y}_{n_{j-1}})}\mu_{i}(d{\bf x}_{n_i}),\label{2.5}
\end{align}
provided $f_{\nu_{i}}( {\bf y}_{n_i})>0, 1\le i\le k_0$, and 
\begin{align}
&\pi_{\mu_{i-1},\nu_{i-1}}\otimes\mu_{i|i-1}({\bf y}_{n_{i-1}},d{\bf x}_{n_i})\label{2.3.1223}\\
:=&\pi_{\mu_{i-1},\nu_{i-1}}({\bf y}_{n_{i-1}}, d{\bf x}_{n_{i-1}})\mu_{i}(d{\bf x}_{[n_{i-1}+1,n_i]}|{\bf x}_{n_{i-1}}),\notag
\end{align}
provided $f_{\nu_{i-1}}( {\bf y}_{n_{i-1}})>0, 2\le i\le k_0$.
For $i=2,\cdots,k_0$,  $Q_1\in \mathcal{P}(\mathbb{R}^{n_i})$, $Q_2\in \mathcal{P}(\mathbb{R}^{d_i})$, and 
${\bf y}_{n_{i-1}}\in \mathbb{R}^{n_{i-1}}$ such that $f_{\nu_{i-1}}({\bf y}_{n_{i-1}})>0$,
let
\begin{align}\label{2.30}
&V_i(Q_1,Q_2;{\bf y}_{n_{i-1}})\\
:=&\inf\{H(\pi \parallel\pi_{0,i} (d{\bf x}_{n_i}\hbox{ }d{\bf y}_{[n_{i-1}+1,n_i]}|{\bf y}_{n_{i-1}})):\pi\in\mathcal{A}(Q_1,Q_2)\},\notag
\end{align}
\begin{equation}
f_{\nu_i}(y|{\bf y}_{n_{i-1}}):=\frac{f_{\nu_i}({\bf y}_{n_{i-1}},y)}{f_{\nu_{i-1}}({\bf y}_{n_{i-1}})}, \quad  y\in\mathbb{R}^{d_i}.\label{2.5.1}
\end{equation}
\begin{remark}\label{rk2.2.0}
Suppose that $h_i:\mathbb{R}^{n_i}\rightarrow [0,\infty), 1\le i\le k_0$ exist and are 
measurable.
Then, from (\ref{2.8}) and (\ref{2.6}), for $i=2,\cdots, k_0$,
\begin{align}
&\pi_{\mu_{i-1},\nu_{i-1}}\otimes\mu_{i|i-1}(d{\bf x}_{n_i}|{\bf y}_{n_{i-1}})\label{2.62}\\
=&
\pi_{\mu_{i-1},\nu_{i-1}}(d{\bf x}_{n_{i-1}}|{\bf y}_{n_{i-1}})\mu_{i}(d{\bf x}_{[n_{i-1}+1,n_i]}|{\bf x}_{n_{i-1}}),\notag\\
&\pi_{0,i} (d{\bf x}_{n_i}\hbox{ }d{\bf y}_{[n_{i-1}+1,n_i]}|{\bf y}_{n_{i-1}})\label{2.3.0}\\
=&
\pi_{\mu_{i-1},\nu_{i-1}}\otimes\mu_{i|i-1}(d{\bf x}_{n_i}|{\bf y}_{n_{i-1}})  p_{i}({\bf x}_{n_i},{\bf y}_{[n_{i-1}+1,n_i]}|{\bf y}_{n_{i-1}})d{\bf y}_{[n_{i-1}+1,n_i]}.\notag
\end{align}
Since $\pi_{\mu_{i},\nu_{i}}\in \mathcal{A}(\mu_{i},\nu_{i})$, the following holds $f_{\nu_{i}}( {\bf y}_{n_i})d{\bf y}_{n_i}$--a.e. (see (\ref{1.210}), (\ref{1.27}), and (\ref{2.10.1})):
\begin{align*}
\pi_{\mu_{i},\nu_{i}}(d{\bf x}_{n_i}|{\bf y}_{n_i})
=&\pi_{\mu_{i},\nu_{i}}({\bf y}_{n_i}, d{\bf x}_{n_i}), \quad 1\le i\le k_0.
\end{align*}
In particular, for $i=2,\cdots, k_0$, the following also holds $f_{\nu_{i-1}}( {\bf y}_{n_{i-1}})d{\bf y}_{n_{i-1}}$--a.e.:
\begin{align}\label{2.6.0}
\pi_{\mu_{i-1},\nu_{i-1}}\otimes\mu_{i|i-1}(d{\bf x}_{n_i}|{\bf y}_{n_{i-1}})
=&\pi_{\mu_{i-1},\nu_{i-1}}\otimes\mu_{i|i-1}({\bf y}_{n_{i-1}},d{\bf x}_{n_i}).
\end{align}
\end{remark}

The following plays a crucial role in the proof of the main result.

\begin{proposition}\label{pp2.1}
Suppose that  (A0, i, iii) and (A1, i) hold.
Then, for $i=2,\cdots, k_0$ and ${\bf y}_{n_{i-1}}\in \mathbb{R}^{n_{i-1}}$ such that $f_{\nu_{i-1}}({\bf y}_{n_{i-1}})>0$,
the following has a solution that is unique up to a multiplicative function of ${\bf y}_{n_{i-1}}$:
\begin{align}\label{2.10}
&f_{\nu_i}({\bf y}_{[n_{i-1}+1,n_i]}|{\bf y}_{n_{i-1}})d{\bf y}_{[n_{i-1}+1,n_i]}\\
=&h_i({\bf y}_{n_{i-1}},{\bf y}_{[n_{i-1}+1,n_i]})d{\bf y}_{[n_{i-1}+1,n_i]}\notag\\
&\qquad\times \int_{\mathbb{R}^{n_i}}
\frac{p_i(x,{\bf y}_{[n_{i-1}+1,n_i]}|{\bf y}_{n_{i-1}})
}{\int_{\mathbb{R}^{d_i}}h_i({\bf y}_{n_{i-1}}, y)p_i(x,y|{\bf y}_{n_{i-1}})dy}
\pi_{\mu_{i-1},\nu_{i-1}}\otimes\mu_{i|i-1}({\bf y}_{n_{i-1}},dx),\notag
\end{align}
provided $\pi_{\mu_{i-1},\nu_{i-1}}({\bf y}_{n_{i-1}}, d{\bf x}_{n_{i-1}})\in \mathcal{P}(\mathbb{R}^{n_{i-1}})$.
In particular, the measure defined by
\begin{align}\label{2.7.1}
&\pi_{\mu_i, \nu_i }({\bf y}_{n_{i-1}},d{\bf x}_{n_i}\hbox{ }d{\bf y}_{[n_{i-1}+1,n_i]})\\
:=&
\frac{h_i ({\bf y}_{n_i})p_i({\bf x}_{n_i},{\bf y}_{n_i})}{h_i (0, {\bf x}_{n_i}, {\bf y}_{n_{i-1}})}
\pi_{\mu_{i-1},\nu_{i-1}}\otimes\mu_{i|i-1}({\bf y}_{n_{i-1}},d{\bf x}_{n_i})d{\bf y}_{[n_{i-1}+1,n_i]}\notag
\end{align}
belongs to $\mathcal{A}(\pi_{\mu_{i-1},\nu_{i-1}}\otimes\mu_{i|i-1}({\bf y}_{n_{i-1}},d{\bf x}_{n_i}),
f_{\nu_i}({\bf y}_{[n_{i-1}+1,n_i]}|{\bf y}_{n_{i-1}})d{\bf y}_{[n_{i-1}+1,n_i]})$.
It is also the unique minimizer of the following,
$f_{\nu_{i-1}}({\bf y}_{n_{i-1}})d{\bf y}_{n_{i-1}}$--a.e.:
\begin{equation}\label{2.11}
V_i(\pi_{\mu_{i-1},\nu_{i-1}}\otimes\mu_{i|i-1}({\bf y}_{n_{i-1}},d{\bf x}_{n_i}),
f_{\nu_i}({\bf y}_{[n_{i-1}+1,n_i]}|{\bf y}_{n_{i-1}})d{\bf y}_{[n_{i-1}+1,n_i]};{\bf y}_{n_{i-1}}),%
\end{equation}
provided that it is finite,  that $\pi_{\mu_{i-1},\nu_{i-1}}({\bf y}_{n_{i-1}}, d{\bf x}_{n_{i-1}})\in \mathcal{P}(\mathbb{R}^{n_{i-1}})$, and  that 
(\ref{2.3.0}) and (\ref{2.6.0}) hold.
\end{proposition}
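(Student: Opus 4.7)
My plan is to recognise equation (\ref{2.10}) as an instance of Schr\"odinger's functional equation in the form displayed in the remark following Theorem \ref{thm1.1}, but with ${\bf y}_{n_{i-1}}$ held fixed as a parameter, and then directly invoke Theorem \ref{thm1.1}. Concretely, set $S_1:=\mathbb{R}^{n_i}$, $S_2:=\mathbb{R}^{d_i}$, $q(x,y):=p_i(x,y|{\bf y}_{n_{i-1}})$, ${\bf m}_1:=\pi_{\mu_{i-1},\nu_{i-1}}\otimes\mu_{i|i-1}({\bf y}_{n_{i-1}},\cdot)$, ${\bf m}_2(dy):=f_{\nu_i}(y|{\bf y}_{n_{i-1}})dy$, and identify ${\bf n}_2(dy)$ with $h_i({\bf y}_{n_{i-1}},y)dy$.

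The first step is to verify the hypotheses of (the obvious two-space extension of) Theorem \ref{thm1.1}: $\mathbb{R}^{n_i}$ and $\mathbb{R}^{d_i}$ are $\sigma$-compact; by (A0, i), $p_i\in C(\mathbb{R}^{n_i}\times\mathbb{R}^{n_i};(0,\infty))$, so $q$ is continuous and strictly positive once ${\bf y}_{n_{i-1}}$ is fixed; ${\bf m}_1\in\mathcal{P}(\mathbb{R}^{n_i})$ by the standing hypothesis $\pi_{\mu_{i-1},\nu_{i-1}}({\bf y}_{n_{i-1}},\cdot)\in\mathcal{P}(\mathbb{R}^{n_{i-1}})$ together with the fact that $\mu_i(\cdot|{\bf x}_{n_{i-1}})$ is a regular conditional probability; and ${\bf m}_2\in\mathcal{P}(\mathbb{R}^{d_i})$ because (\ref{1.25}) and (\ref{2.5.1}) force $\int f_{\nu_i}(y|{\bf y}_{n_{i-1}})dy=1$ whenever $f_{\nu_{i-1}}({\bf y}_{n_{i-1}})>0$. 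Theorem \ref{thm1.1} then delivers a nonnegative measurable $h_i({\bf y}_{n_{i-1}},\cdot)$ solving (\ref{2.10}), unique up to a multiplicative positive constant; since this constant may depend on the frozen ${\bf y}_{n_{i-1}}$, this is precisely the stated ``unique up to a multiplicative function of ${\bf y}_{n_{i-1}}$'' clause. Absolute continuity of ${\bf n}_2$ is automatic from ${\bf m}_2\ll dy$, as in the remark attached to (\ref{1.151}).

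Next I would check that (\ref{2.7.1}) coincides with the Schr\"odinger measure ${\bf m}(dx\,dy)=q(x,y){\bf n}_1(dx){\bf n}_2(dy)$ produced by Theorem \ref{thm1.1}. The key algebraic step uses (\ref{1.170}) to split
\[
p_i({\bf x}_{n_i},{\bf y}_{n_i})=p_i({\bf x}_{n_i},{\bf y}_{[n_{i-1}+1,n_i]}|{\bf y}_{n_{i-1}})\cdot\int_{\mathbb{R}^{d_i}}p_i({\bf x}_{n_i},({\bf y}_{n_{i-1}},z))dz,
\]
after which the unconditional normalising factor cancels against the same quantity embedded inside $h_i(0,{\bf x}_{n_i},{\bf y}_{n_{i-1}})$ via (\ref{3.12}). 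The resulting expression is exactly $q(x,y){\bf n}_1(dx){\bf n}_2(dy)$, so its marginals are ${\bf m}_1$ (integrate out $d{\bf y}_{[n_{i-1}+1,n_i]}$) and ${\bf m}_2$ (which is the right-hand side of (\ref{2.10})), giving the claimed membership in $\mathcal{A}$.

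For the minimizer claim (\ref{2.11}), I would invoke the classical Schr\"odinger-problem theory cited immediately after (\ref{1.151}): under the finite-infimum hypothesis, the explicit density of the form (\ref{1.151}), adapted to the reference measure $\pi_{0,i}(d{\bf x}_{n_i}\,d{\bf y}_{[n_{i-1}+1,n_i]}|{\bf y}_{n_{i-1}})$ appearing in (\ref{2.30}), is the unique minimizer of the corresponding relative entropy by strict convexity. The algebraic identification above shows that this explicit minimizer coincides with (\ref{2.7.1}). The ``$f_{\nu_{i-1}}({\bf y}_{n_{i-1}})d{\bf y}_{n_{i-1}}$-a.e.'' qualifier is then forced by the identities (\ref{2.3.0})--(\ref{2.6.0}) of Remark \ref{rk2.2.0}, which hold only on a conull set. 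I expect the principal obstacle to be the bookkeeping of conditional versus unconditional $p_i$ and $h_i(0,\cdot,\cdot)$ so that the Radon--Nikodym ratios cancel correctly between (\ref{2.10}), (\ref{3.12}), and (\ref{2.7.1}); once this algebraic identification is secured, existence and uniqueness of $h_i$ follow from Jamison's theorem and the minimizer characterization from standard Schr\"odinger theory, with no further estimates required.
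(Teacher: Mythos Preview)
Your approach is correct and leads to the same conclusion, but it differs from the paper's in one structural respect worth noting. You invoke ``the obvious two-space extension'' of Theorem~\ref{thm1.1}, taking $S_1=\mathbb{R}^{n_i}$ and $S_2=\mathbb{R}^{d_i}$; this extension is indeed standard in the literature (e.g.\ \cite{RT93}), but it is not the theorem actually stated in the paper, where ${\bf m}_1$ and ${\bf m}_2$ live on the \emph{same} $\sigma$-compact space $S$. The paper handles this dimension mismatch by a padding trick: it fixes an arbitrary positive continuous probability density $q_{i-1}$ on $\mathbb{R}^{n_{i-1}}$, replaces the target marginal $f_{\nu_i}(\cdot\,|\,{\bf y}_{n_{i-1}})$ on $\mathbb{R}^{d_i}$ by the product $q_{i-1}({\bf z}_{n_{i-1}})f_{\nu_i}({\bf y}_{[n_{i-1}+1,n_i]}|{\bf y}_{n_{i-1}})$ on $\mathbb{R}^{n_{i-1}}\times\mathbb{R}^{d_i}=\mathbb{R}^{n_i}$, and replaces the kernel by $q_{i-1}({\bf z}_{n_{i-1}})p_i(x,{\bf y}_{[n_{i-1}+1,n_i]}|{\bf y}_{n_{i-1}})$. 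This yields an SFE on $\mathbb{R}^{n_i}\times\mathbb{R}^{n_i}$ to which Theorem~\ref{thm1.1} applies verbatim; integrating the resulting $\tilde h_i$ against $q_{i-1}(z)\,dz$ recovers the desired $h_i$, and conversely any solution of (\ref{2.10}) solves the padded equation, giving uniqueness. Your route is cleaner and your algebraic identification of (\ref{2.7.1}) with the Schr\"odinger coupling is exactly right; the paper's route is more self-contained relative to the single-space Theorem~\ref{thm1.1} it has on record, and the $q_{i-1}$ device is in any case reused later in the proof of Lemma~\ref{lm3.7}. For the minimizer claim both proofs appeal to the same classical result (the paper cites \cite{RT93}, Theorem~3, explicitly).
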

\begin{proposition}\label{pp2.4}
Suppose that (A0)--(A1) hold.
Then there exists a continuous solution $h_1$ of (\ref{1.19}) such that
$\pi_{\mu_{1},\nu_{1}}({\bf y}_{n_1}, d{\bf x}_{n_1})\in \mathcal{P}(\mathbb{R}^{n_{1}})$ for ${\bf y}_{n_1}\in \mathbb{R}^{d_1}$ for which $ f_{\nu_{1}}({\bf y}_{n_1})>0$,
and that 
$f_{\nu_1}^{-1}((0,\infty))\ni{\bf y}_{n_1}\mapsto \pi_{\mu_{2}, \nu_{2}}({\bf y}_{n_1},d{\bf x}_{n_2}
 d{\bf y}_{[n_1+1,n_2]})$
is weakly continuous, i.e., 
 for any $\varphi \in C_0(\mathbb{R}^{n_2}\times \mathbb{R}^{d_2})$
and ${\bf y}_{n_1}\in \mathbb{R}^{n_1}$ for which $ f_{\nu_{1}}({\bf y}_{n_1})>0$,
\begin{align}\label{2.12weakly continuous}
&\lim_{{\bf z}_{n_1}\to {\bf y}_{n_1}}\int_{\mathbb{R}^{n_2}\times \mathbb{R}^{d_2}}
\varphi (x,y)\pi_{\mu_{2}, \nu_{2}}({\bf z}_{n_1},dx \hbox{ }dy)\\
=&\int_{\mathbb{R}^{n_2}\times \mathbb{R}^{d_2}}
\varphi (x,y)\pi_{\mu_{2}, \nu_{2}}({\bf y}_{n_1},dx \hbox{ }dy).\notag
\end{align}
\end{proposition}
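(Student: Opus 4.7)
The plan has three parts: (i) construct a continuous version of $h_1$ for which (\ref{1.19}) holds pointwise on $\{f_{\nu_1}>0\}$, (ii) use this to conclude $\pi_{\mu_1,\nu_1}({\bf y}_{n_1},\cdot)\in\mathcal{P}(\mathbb{R}^{n_1})$, and (iii) establish the weak continuity of ${\bf y}_{n_1}\mapsto \pi_{\mu_2,\nu_2}({\bf y}_{n_1},\cdot\hbox{ }\cdot)$.

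For part (i), Remark \ref{rk2.1}(ii) gives an $L^1_{\text{loc}}$ solution $h_1$ of (\ref{1.19}) unique up to a positive multiplicative constant, so that (\ref{1.19}) is valid $d{\bf y}_{n_1}$--a.e. Since $p_1\in C(\mathbb{R}^{d_1}\times\mathbb{R}^{d_1};(0,\infty))$, the function $h_1(0,\cdot)$ defined in (\ref{3.12}) is continuous and strictly positive on $\mathbb{R}^{d_1}$ by dominated convergence (local majorants for $p_1(\cdot,z)$ being extracted from the convexity hypothesis (A0, ii) via Jensen's inequality on $\log p_1$). Rearranging (\ref{1.19}) I would redefine $h_1$ on $f_{\nu_1}^{-1}((0,\infty))$ by
\begin{equation*}
h_1({\bf y}_{n_1}):=\frac{f_{\nu_1}({\bf y}_{n_1})}{\displaystyle\int_{\mathbb{R}^{d_1}}\frac{p_1(x,{\bf y}_{n_1})}{h_1(0,x)}\mu_1(dx)}.
\end{equation*}
The numerator is continuous by (A1, ii) and the denominator is continuous and strictly positive by the same uniform integrability argument. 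This redefinition agrees with the original $h_1$ a.e., so still solves (\ref{1.19}), and now does so \emph{pointwise} wherever $f_{\nu_1}>0$.

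For part (ii), direct integration of (\ref{2.5}) at $i=1$ against $\mu_1(d{\bf x}_{n_1})$ gives
\begin{equation*}
\pi_{\mu_1,\nu_1}({\bf y}_{n_1},\mathbb{R}^{n_1})=\frac{h_1({\bf y}_{n_1})}{f_{\nu_1}({\bf y}_{n_1})}\int_{\mathbb{R}^{d_1}}\frac{p_1({\bf x}_{n_1},{\bf y}_{n_1})}{h_1(0,{\bf x}_{n_1})}\mu_1(d{\bf x}_{n_1})=1
\end{equation*}
by the pointwise form of (\ref{1.19}) just established. The continuity of the density $h_1({\bf y}_{n_1})p_1({\bf x}_{n_1},{\bf y}_{n_1})/h_1(0,{\bf x}_{n_1})$ in $({\bf x}_{n_1},{\bf y}_{n_1})$ together with dominated convergence then shows that ${\bf y}_{n_1}\mapsto \pi_{\mu_1,\nu_1}({\bf y}_{n_1},\cdot)$, and hence also ${\bf y}_{n_1}\mapsto \pi_{\mu_1,\nu_1}\otimes\mu_{2|1}({\bf y}_{n_1},\cdot)$ via (\ref{2.3.1223}), is weakly continuous on $f_{\nu_1}^{-1}((0,\infty))$.

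For part (iii), apply Proposition \ref{pp2.1} for $i=2$ at each fixed ${\bf y}_{n_1}\in f_{\nu_1}^{-1}((0,\infty))$ to obtain $h_2({\bf y}_{n_1},\cdot)$ solving (\ref{2.10}), unique up to a positive factor depending on ${\bf y}_{n_1}$; note that the density in (\ref{2.7.1}) is invariant under such renormalizations. Reformulating (\ref{2.10}) as a Schr\"odinger problem with kernel $p_2(x,\cdot|{\bf y}_{n_1})$, first marginal $\pi_{\mu_1,\nu_1}\otimes\mu_{2|1}({\bf y}_{n_1},\cdot)$, and second marginal $f_{\nu_2}(\cdot|{\bf y}_{n_1})d{\bf y}_{[n_1+1,n_2]}$, all three data depend continuously on ${\bf y}_{n_1}$ (by part (ii), (\ref{2.5.1}), and (A1, ii)). I would then invoke stability of the Schr\"odinger functional equation with respect to its data (an argument in the spirit of Fortet's iterative scheme, where convexity (A0, iii) provides the tightness needed to extract a uniformly convergent subsequence from any sequence ${\bf z}_{n_1}\to {\bf y}_{n_1}$ and uniqueness of $h_2$ identifies the limit) to choose a version of $h_2$ that is jointly continuous in $({\bf y}_{n_1},{\bf y}_{[n_1+1,n_2]})$. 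Weak continuity (\ref{2.12weakly continuous}) then follows by plugging the ${\bf z}_{n_1}$-indexed density from (\ref{2.7.1}) against $\varphi\in C_0$ and passing to the limit by dominated convergence, using the weak continuity from part (ii) for the factor $\pi_{\mu_1,\nu_1}\otimes\mu_{2|1}({\bf z}_{n_1},\cdot)$.

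The main obstacle is the joint continuity of a suitably normalized $h_2$ in ${\bf y}_{n_1}$, since Proposition \ref{pp2.1} only gives it for each fixed ${\bf y}_{n_1}$. Resolving this requires a stability analysis of the Schr\"odinger equation with respect to its data, where the convexity hypothesis (A0, iii) is essential for producing the uniform tail estimates needed to promote pointwise limits to weak limits and ultimately to ratio continuity in (\ref{2.7.1}).
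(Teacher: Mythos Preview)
Your parts (i) and (ii) follow the paper's route closely: the paper packages the convexity argument you sketch as Lemma~\ref{lm3.1} (continuity of $\mathcal{I}_1(\varphi)$ on the interior of its domain), and the redefinition of $h_1$ via the ratio $f_{\nu_1}/\mathcal{I}_1(1/h_1(0,\cdot))$ is exactly the content of Lemmas~\ref{lm3.2} and~\ref{lm3.4}. Lemma~\ref{lm3.3} then gives the weak continuity of ${\bf y}_{n_1}\mapsto \pi_{\mu_1,\nu_1}\otimes\mu_{2|1}({\bf y}_{n_1},\cdot)$, just as you describe.

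Part (iii) is where your plan diverges, and there is a genuine issue. You aim for a version of $h_2$ that is \emph{jointly} continuous in $({\bf y}_{n_1},{\bf y}_{[n_1+1,n_2]})$, and you propose to obtain it via a Fortet-type stability argument. But the paper explicitly flags the existence of a continuous-in-${\bf y}_{n_{i-1}}$ solution $h_i$ as an \emph{open problem} (see the Remark immediately after Theorem~\ref{thm2.1}). Your sketch of how convexity (A0,\,iii) would yield the needed compactness for $h_2$ is too vague to close this gap; in particular, $h_2$ is only determined up to an arbitrary positive function of ${\bf y}_{n_1}$, and there is no canonical normalization in sight that would force continuity across fibers.

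The paper avoids this obstacle entirely: it does not attempt to make $h_2$ continuous in ${\bf y}_{n_1}$. Instead, in Lemma~\ref{lm3.7} it works directly at the level of the bridge \emph{measure}. The device is to tensor with an auxiliary positive density $q_1$ on $\mathbb{R}^{n_1}$, so that the ${\bf y}_{n_1}$-dependent conditional problem becomes a bona fide Schr\"odinger system on $\mathbb{R}^{n_2}\times(\mathbb{R}^{n_1}\times\mathbb{R}^{d_2})$ with kernel $q_1(z)p({\bf x}_{n_2},({\bf y}_{n_1},y))$, first marginal $\pi_{\mu_1,\nu_1}\otimes\mu_{2|1}({\bf y}_{n_1},\cdot)$, and second marginal $q_1(z)f_{\nu_2}(\cdot|{\bf y}_{n_1})\,dz\,dy$. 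All three data are weakly or locally-uniformly continuous in ${\bf y}_{n_1}$, and the paper then invokes a ready-made stability theorem for Schr\"odinger bridges (Lemma~\ref{lm3.6.0}, taken from \cite{mikami2021}) to conclude weak continuity of $q_1(z)dz\,\pi_{\mu_2,\nu_2}({\bf y}_{n_1},dx\,dy)$, which is equivalent to (\ref{2.12weakly continuous}). So your instinct to use stability of the SFE with respect to its data is exactly right; the point is that this stability gives weak continuity of the bridge measure directly, and one neither needs nor obtains joint continuity of $h_2$.
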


In (\ref{1.3.0})--(\ref{1.3.1}), the Knothe--Rosenblatt rearrangements $T^{KR}_i, 1\le i\le d$ are defined,
and give the minimizers of variational problems (\ref{1.5})--(\ref{1.3}), provided they are finite.
In the following, instead of defining mappings, we consider a system of functional equations (\ref{1.210})
from which we describe the minimizers of a system of variational problems (\ref{1.18}) that can be considered an analog of (\ref{1.3}) (see \cite{M2004} and also \cite{Leo1,Leo2,M2021} for the relation between $T_1$ and (\ref{1.5}), and (\ref{1.17})).
(\ref{1.18}) and (\ref{1.210}) can be also considered generalizations of Schr\"odinger's problem and 
Schr\"odinger's functional equation, respectively.
We recall that $\pi_{\mu_1,\nu_1}=\pi_{opt,1}$, provided $V_1(\mu_1,\nu_1)$ is finite (see (\ref{2.3})).

\begin{theorem}\label{thm2.1}
Suppose that (A0)--(A1) hold.
Then  for $i=2,\cdots, k_0$, 
there exists a 
measurable function $h_i$ that satisfies  (\ref{2.10}), $f_{\nu_{i-1}}( {\bf y}_{n_{i-1}})d{\bf y}_{n_{i-1}}$--a.e. and 
such that  $h_i({\bf y}_{n_{i-1}},\cdot)\in C(\mathbb{R}^{d_i})$ for ${\bf y}_{n_{i-1}}\in \mathbb{R}^{n_{i-1}}$.
In particular, $h_i$ is  a solution to (\ref{1.210}) that is unique up to a multiplicative 
measurable function of ${\bf y}_{n_{i-1}}$, and
\begin{align}\label{2.10.1}
\pi_{\mu_i,\nu_i}(d{\bf x}_{n_i}\hbox{ }d{\bf y}_{n_i})
=&\prod_{j=1}^i\frac{h_j ({\bf y}_{n_j})p_j({\bf x}_{n_j},{\bf y}_{n_j})}
{h_j(0, {\bf x}_{n_j}, {\bf y}_{n_{j-1}})}\mu_i(d{\bf x}_{n_i})d{\bf y}_{n_i}\\
\in &\mathcal{A}(\mu_i,\nu_i;\pi_{\mu_{i-1},\nu_{i-1}}).\notag
\end{align}
$\pi_{\mu_i,\nu_i}$ is the unique minimizer of  $V_i(\mu_i,\nu_i;\pi_{\mu_{i-1},\nu_{i-1}})$, provided it is finite.
\end{theorem}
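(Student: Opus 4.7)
The plan is induction on $i$. The base $i=1$ is the classical Schr\"odinger theory: Theorem \ref{thm1.1} with (A0,i) produces a continuous $h_1$ solving (\ref{1.19}) and identifies (\ref{2.3}) as the minimizer of $V_1(\mu_1,\nu_1)$. For the step $i-1\Rightarrow i$, I would combine Proposition \ref{pp2.1} fiberwise with a measurable-assembly argument over ${\bf y}_{n_{i-1}}$, using the inductive hypothesis that ${\bf y}_{n_{i-1}}\mapsto\pi_{\mu_{i-1},\nu_{i-1}}({\bf y}_{n_{i-1}},\cdot)$ is a weakly continuous Markov kernel on $\{f_{\nu_{i-1}}>0\}$ (which is Proposition \ref{pp2.4} at $i=2$ and must be propagated for $i\ge 3$).

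For each ${\bf y}_{n_{i-1}}\in\{f_{\nu_{i-1}}>0\}$, Proposition \ref{pp2.1} supplies a continuous $h_i({\bf y}_{n_{i-1}},\cdot)$ solving (\ref{2.10}), unique up to a multiplicative constant, and designates (\ref{2.7.1}) as the unique fiberwise minimizer of (\ref{2.11}). I would pin down a jointly measurable version of $h_i$ by a Fortet--Sinkhorn iteration with a fixed normalization (e.g.\ $\int_K h_i({\bf y}_{n_{i-1}},z)\,dz=1$ for a suitable compact $K\subset\mathbb{R}^{d_i}$): each iterate is jointly measurable by Fubini, and the pointwise limit inherits this. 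The global measure is then defined by integration,
\begin{align*}
\pi_{\mu_i,\nu_i}(d{\bf x}_{n_i}\,d{\bf y}_{n_i}) := \pi_{\mu_i,\nu_i}\bigl({\bf y}_{n_{i-1}},d{\bf x}_{n_i}\,d{\bf y}_{[n_{i-1}+1,n_i]}\bigr)\,f_{\nu_{i-1}}({\bf y}_{n_{i-1}})\,d{\bf y}_{n_{i-1}}.
\end{align*}
Expanding via (\ref{2.7.1}) and the inductive formula (\ref{2.10.1}) at level $i-1$ yields (\ref{2.10.1}) at level $i$; checking marginals fiberwise through Proposition \ref{pp2.1} and integrating shows $\pi_{\mu_i,\nu_i}\in\mathcal{A}(\mu_i,\nu_i;\pi_{\mu_{i-1},\nu_{i-1}})$, and integrating (\ref{2.10}) against $f_{\nu_{i-1}}\,d{\bf y}_{n_{i-1}}$ delivers the global functional equation (\ref{1.210}).

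For optimality and uniqueness I invoke the chain rule for relative entropy. By Remark \ref{rk2.2}, any $\pi\in\mathcal{A}(\mu_i,\nu_i;\pi_{\mu_{i-1},\nu_{i-1}})$ has $({\bf x}_{n_{i-1}},{\bf y}_{n_{i-1}})$-marginal $\pi_{\mu_{i-1},\nu_{i-1}}$, so $\pi$ and $\pi_{0,i}$ share the ${\bf y}_{n_{i-1}}$-marginal $f_{\nu_{i-1}}\,d{\bf y}_{n_{i-1}}$, whence
\begin{align*}
H(\pi\parallel\pi_{0,i}) = \int H\bigl(\pi(\cdot|{\bf y}_{n_{i-1}})\parallel\pi_{0,i}(\cdot|{\bf y}_{n_{i-1}})\bigr)\,f_{\nu_{i-1}}({\bf y}_{n_{i-1}})\,d{\bf y}_{n_{i-1}}.
\end{align*}
Conditioning on ${\bf y}_{n_{i-1}}$ identifies the marginals of $\pi(\cdot|{\bf y}_{n_{i-1}})$ as $\pi_{\mu_{i-1},\nu_{i-1}}\otimes\mu_{i|i-1}({\bf y}_{n_{i-1}},\cdot)$ and $f_{\nu_i}(\cdot|{\bf y}_{n_{i-1}})\,d{\bf y}_{[n_{i-1}+1,n_i]}$, placing it in the admissibility class of (\ref{2.11}); Proposition \ref{pp2.1} then bounds each integrand below by the value attained at $\pi_{\mu_i,\nu_i}({\bf y}_{n_{i-1}},\cdot)$ with equality a.e.\ characterizing the minimizer. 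This yields both the minimality of $\pi_{\mu_i,\nu_i}$ for $V_i(\mu_i,\nu_i;\pi_{\mu_{i-1},\nu_{i-1}})$ and the uniqueness of $h_i$ up to a multiplicative measurable function of ${\bf y}_{n_{i-1}}$.

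The principal obstacle is establishing and propagating the required regularity (joint measurability at minimum, ideally weak continuity on $\{f_{\nu_{i-1}}>0\}$) of the kernel ${\bf y}_{n_{i-1}}\mapsto\pi_{\mu_{i-1},\nu_{i-1}}({\bf y}_{n_{i-1}},\cdot)$ through the induction, so that Proposition \ref{pp2.1} can be applied fiberwise and the resulting $\pi_{\mu_i,\nu_i}({\bf y}_{n_{i-1}},\cdot)$ itself enjoys sufficient regularity to feed level $i+1$. Proposition \ref{pp2.4} handles this at $i=2$; at higher levels the bootstrap should combine continuity of $p_i$ from (A0,i), openness of $\{f_{\nu_{i-1}}>0\}$ from Remark \ref{rk2.1}(iii), and stability of the Fortet--Sinkhorn fixed-point equation under weak convergence of its input kernel. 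Everything else is algebraic manipulation of (\ref{2.10.1})--(\ref{2.7.1}) and standard applications of the chain rule.
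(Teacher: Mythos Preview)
Your overall architecture---induction on $i$, fiberwise application of Proposition \ref{pp2.1}, reassembly into a global measure, and the chain rule for relative entropy to get optimality---is exactly the paper's strategy. The entropy decomposition and the identification of conditional marginals are carried out in the paper just as you describe.

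Where you depart is in the mechanism for producing a \emph{jointly measurable} $h_i$, and here there is a soft gap. You propose a Fortet--Sinkhorn iteration with a fixed normalization; but under the bare hypotheses (A0)--(A1), convergence of Sinkhorn iterates is not automatic (Fortet's original argument needs quantitative bounds on the kernel, and modern proofs typically use Hilbert-metric contraction which requires boundedness of the ratio $\sup p/\inf p$ on compacta at least). The paper avoids this entirely. It first invokes Lemma \ref{lm3.4} to get, for each fixed ${\bf y}_{n_{i-1}}$, a continuous solution $h_i({\bf y}_{n_{i-1}},\cdot)$ of (\ref{2.10}), with no measurability in the first argument yet. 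It then normalizes by a \emph{measurable selection}: using the $\sigma$-compactness of the open set $f_{\nu_i}^{-1}((0,\infty))$ and the selection lemma from control theory, it finds a Borel $\xi_i:f_{\nu_{i-1}}^{-1}((0,\infty))\to\mathbb{R}^{d_i}$ with $f_{\nu_i}({\bf y}_{n_{i-1}},\xi_i({\bf y}_{n_{i-1}}))>0$ a.e., and sets $\tilde h_i({\bf y}_{n_{i-1}},y):=h_i({\bf y}_{n_{i-1}},y)/h_i({\bf y}_{n_{i-1}},\xi_i({\bf y}_{n_{i-1}}))$. Measurability of this quotient is then read off as a pointwise limit of mollified integrals against the kernel $\pi_{\mu_i,\nu_i}({\bf y}_{n_{i-1}},\cdot)$, whose measurability in ${\bf y}_{n_{i-1}}$ is Lemma \ref{lm3.7}.

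Relatedly, your hope that weak \emph{continuity} of ${\bf y}_{n_{i-1}}\mapsto\pi_{\mu_{i-1},\nu_{i-1}}({\bf y}_{n_{i-1}},\cdot)$ propagates through the induction is more than the paper obtains or needs. Continuity holds only at $i=2$ (Proposition \ref{pp2.4}); for $i\ge 3$ the paper only establishes weak \emph{measurability}, and bridges the gap via Lusin's theorem (restricting to closed sets $F_{i-1,n}$ of nearly full measure on which continuity is recovered) together with the stability result Lemma \ref{lm3.6.0}. So your ``stability of the Fortet--Sinkhorn fixed point under weak convergence of the input kernel'' is morally right, but the input kernel is only measurable, not continuous, and the paper's workaround is Lusin plus Lemma \ref{lm3.6.0} rather than any iterative scheme.
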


\begin{remark}
In (\ref{1.210}), $h_i({\bf y}_{n_i})=h_i({\bf y}_{n_{i-1}},{\bf y}_{[n_{i-1}+1,n_i]})$ is not necessarily continuous in ${\bf y}_{n_{i-1}}$.
Indeed, for any positive measurable function $\varphi ({\bf y}_{n_{i-1}})$,
$h_i({\bf y}_{n_i})\varphi ({\bf y}_{n_{i-1}})$ also satisfies (\ref{1.210}).
It is our future problem to study if there exists a continuous solution to (\ref{1.210}).
\end{remark}


{\color{black}
For a probability density function $f$ on $\mathbb{R}^d$ such that  $f(x)\log f(x)$ is $dx$--integrable, let
\begin{equation}
\mathcal{S}(f):=\int_{\mathbb{R}^d} f(x)\log f(x) dx.
\end{equation}
The following is  an example such that $V_i(\mu_i,\nu_i;\pi_{\mu_{i-1},\nu_{i-1}})$ is finite.
The proof is given in the Appendix for completeness.

\begin{example}\label{ex2.1}
Suppose that  (A0, i) and (A1) hold.
Suppose also that 
there exists  $i\in \{2,\cdots, k_0\}$ such that
there exists $C>0$ for which
\begin{align}\label{2.15}
C^{-1}\exp(-C|{\bf x}_{n_i}-{\bf y}_{n_i}|^2)\le &p_i({\bf x}_{n_i},{\bf y}_{n_i})\\
\le &C\exp(-C^{-1}|{\bf x}_{[n_{i-1}+1,n_i]}-{\bf y}_{[n_{i-1}+1,n_i]}|^2),\notag
\end{align}
for $ {\bf x}_{n_i},{\bf y}_{n_i}\in\mathbb{R}^{n_i}$, and such that 
$\mu_i$ and $\nu_i$ have the finite second moments and $\mathcal{S}(f_{\nu_i})$ is finite.
Then $V_i(\mu_i,\nu_i;\pi_{\mu_{i-1},\nu_{i-1}})$ is finite.
\end{example}

We discuss the measure on the path space constructed from Theorem \ref{thm2.1} by a simple example.
Let
\begin{align*}
g(t,z):=&\frac{1}{\sqrt{2\pi t}}\exp \left(-\frac{|z|^2}{2t}\right), \quad (t,z)\in (0,\infty)\times \mathbb{R},\\
h_{1}(t, x):=&\int_{\mathbb{R}}
h_1(z)g(1-t, z- x)dz,\quad 0\le t<1,x\in \mathbb{R},\\
 \overline h_{2}(t, y_1,y_2):=&\int_{\mathbb{R}}h_2(y_1,z)
g(1-t, z- y_2)dz,\quad 0\le t<1,y_1,y_2\in \mathbb{R}.
\end{align*}
Let $X=(X_1, X_2)$ be an $\mathbb{R}^2$--valued random variable such that
$PX^{-1}=\mu$,
$B=(B_1,B_2)$ be an $\mathbb{R}^2$--valued Brownian motion which is independent of $X$,
and $\pi(t)(\omega):=\omega (t), \omega\in C([0,1];\mathbb{R}^2)$.
\begin{example}
Suppose that $d=k_0=2$, that $p(x,y)=\prod_{i=1}^2 g(1,y_i-x_i), x,y\in \mathbb{R}^2$, and that (A1) holds.
Then 
$\pi_{\mu,\nu}$ induces a Borel probability measure on $C([0,1];\mathbb{R}^2)$:
 for $A\in \mathcal{B}(C([0,1];\mathbb{R}^2))$, let
 \begin{align*}
P_{\pi_{\mu,\nu}}(A)
:=&E\left[\frac{h_2(X+B(1))}{\overline h_{2}(0, X_1+B_1(1),X_2)}\frac{h_1(X_1+B_1(1))}{h_1(0,X_1)};X+B(\cdot)\in A\right].
\end{align*}
It is easy to see that $P_{\pi_{\mu,\nu}}$  is a probability law of a Bernstein process (see \cite{B32, 1-Jamison1974})
and is Markovian in the case where $\mu$ and $\nu$ are product measures on
$\mathbb{R}^2$, in which case $f_{\nu}(y_2|y_1)$, $h_2(y_1,y_2)$ and $\overline h_2(t, y_1,y_2)$ are independent of $y_1$.
$P_{\pi_{\mu,\nu}}\pi(t)^{-1}, t\in (0,1)$ has the following probability density:
for $z=(z_i)_{i=1}^2\in \mathbb{R}^2$,
 \begin{align*}
\int_{\mathbb{R}^2}\frac{1}{h_1(0,x_1)}\mu(dx)\prod_{i=1,2}g(t,z_i-x_i)
\int_{\mathbb{R}}\frac{\overline h_{2}(t, y, z_2)}{\overline h_{2}(0, y,x_2)}h_1(y)g_1(1-t,y-z_1)dy.
\end{align*}
It is our future problem to construct a theory of stochastic analysis for the Bernstein process defined as above.
\end{example}
}

\section{Lemmas}\label{sec:3}

In this section, we give technical lemmas.

For $i\ge 1$ and a Borel measurable function $\varphi:\mathbb{R}^{n_i}\rightarrow [0,\infty)$,
let
\begin{align*}
\mathcal{I}_i(\varphi)(y):=
&\int_{\mathbb{R}^{n_i}}\varphi(x)\mu_i(dx)p_i(x,y),\quad y\in \mathbb{R}^{n_i}\end{align*}
(see (\ref{1.16}) and (\ref{1.21.0}) for notation).
For $n\ge 1$ and a function $\phi:\mathbb{R}^n\rightarrow \mathbb{R}\cup\{\infty\}$, let
$$Dom (\phi):=\{x\in \mathbb{R}^{n}:\phi(x)<\infty\},\quad Dom (\mathcal{I}_i(\varphi)):=Dom (\mathcal{I}_i(\varphi)(\cdot)),
$$
where we omit the dependence on $n$.

The following lemma will be used to prove Lemmas \ref{lm3.2}, \ref{lm3.3}, and \ref{lm3.4}.
{\color{black}It can be proven from the well--known fact that the domain of a convex function is convex and a convex function is continuous in the interior of its domain.
We give the proof in the Appendix for completeness.}

\begin{lemma}\label{lm3.1}
Let $\varphi_i:\mathbb{R}^{n_i}\rightarrow [0,\infty), i=1,\cdots, k_0$ be Borel measurable functions.
(i) Suppose that  (A0, i, ii) hold. Then $Dom (\mathcal{I}_1(\varphi_1))$  is convex and $\mathcal{I}_1(\varphi_1)(\cdot)$  is continuous in the interior of its domain $Dom (\mathcal{I}_1(\varphi_1))$.
(ii) Suppose that  (A0, i, iii) hold. Then for $i=2,\cdots, k_0$ and ${\bf y}_{n_{i-1}}\in \mathbb{R}^{n_{i-1}}$, 
$Dom (\mathcal{I}_i(\varphi_i)({\bf y}_{n_{i-1}},\cdot))$  is convex and $\mathcal{I}_i(\varphi_i)({\bf y}_{n_{i-1}},\cdot)$  is continuous in the interior of its domain $Dom (\mathcal{I}_i(\varphi_i)({\bf y}_{n_{i-1}},\cdot))$.
\end{lemma}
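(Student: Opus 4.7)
The plan is to derive both assertions from a single observation: under the stated hypotheses, a suitable exponential multiple of $\mathcal{I}_i(\varphi_i)$ is log--convex in the relevant variable, after which the conclusions follow from two classical facts about convex functions.

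For part (i), assumption (A0, ii) says that for each fixed $x\in\mathbb{R}^{d_1}$ the function $y\mapsto p_1(x,y)\exp(\psi_1(y))$ is log--convex on $\mathbb{R}^{d_1}$, since its logarithm $\log p_1(x,y)+\psi_1(y)$ is convex. A standard consequence of H\"older's inequality is that any integral with respect to a nonnegative measure of a parametrized family of log--convex functions is again log--convex in the extended sense, with values in $(0,+\infty]$. Applying this to the measure $\varphi_1(x)\mu_1(dx)$ shows that $y\mapsto \exp(\psi_1(y))\mathcal{I}_1(\varphi_1)(y)$ is log--convex, so $y\mapsto \log\mathcal{I}_1(\varphi_1)(y)+\psi_1(y)$ is a convex function from $\mathbb{R}^{d_1}$ to $(-\infty,+\infty]$. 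Because $\psi_1$ is finite and continuous everywhere, the effective domain of this convex function coincides with $Dom(\mathcal{I}_1(\varphi_1))$, which is therefore convex. Then I would invoke the classical fact that a proper convex function on a finite--dimensional Euclidean space is continuous on the interior of its effective domain; subtracting the continuous function $\psi_1$ and exponentiating yields continuity of $\mathcal{I}_1(\varphi_1)$ on the interior of $Dom(\mathcal{I}_1(\varphi_1))$.

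Part (ii) is handled in exactly the same way, with ${\bf y}_{n_{i-1}}\in\mathbb{R}^{n_{i-1}}$ kept frozen as a parameter throughout the argument. Assumption (A0, iii) provides the log--convexity of $y\mapsto p_i({\bf x}_{n_i},({\bf y}_{n_{i-1}},y))\exp(\psi_i(y))$ in $y\in\mathbb{R}^{d_i}$, uniformly over ${\bf x}_{n_i}\in\mathbb{R}^{n_i}$. The same integration argument then gives that $y\mapsto \exp(\psi_i(y))\mathcal{I}_i(\varphi_i)({\bf y}_{n_{i-1}},y)$ is log--convex, and the rest of the argument is identical.

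The main obstacle I anticipate is purely bookkeeping rather than conceptual: one has to treat $\mathcal{I}_i(\varphi_i)$ as a possibly $(0,+\infty]$--valued function with care, so that the log--convexity step (which uses H\"older even when some integrals are infinite) and the appeal to continuity are both applied to genuinely proper convex functions with values in $(-\infty,+\infty]$. Measurability of the integrand in $x$ for each $y$ follows from continuity of $p_i$ and Borel measurability of $\varphi_i$. Once these routine points are addressed, both parts of the lemma reduce to the two standard facts just quoted.
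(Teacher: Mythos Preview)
Your proposal is correct and follows essentially the same route as the paper: both show that $y\mapsto \psi_i(y)+\log\mathcal{I}_i(\varphi_i)({\bf y}_{n_{i-1}},y)$ is convex via H\"older's inequality, then invoke the standard facts that the effective domain of a convex function is convex and that a proper convex function is continuous on the interior of its domain. The only minor difference is that the paper explicitly separates out the trivial case $\int\varphi_i\,d\mu_i=0$ (where $\mathcal{I}_i(\varphi_i)\equiv 0$ and the conclusion is immediate) before passing to logarithms, whereas you fold this into your ``bookkeeping'' remark; you should make that case explicit so that the log is always applied to a strictly positive quantity.
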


{\color{black}
The following lemma can be proven in the same way as Lemma \ref{lm3.4}.
We omit the proof.}

\begin{lemma}\label{lm3.2}
Suppose that (A0, i, ii) and (A1) hold. 
Then $h_1$ in (\ref{1.19}) can be taken to be continuous in $\mathbb{R}^{d_1}$.
In particular,
\begin{equation}
f_{\nu_1}(y)=h_1(y)\int_{\mathbb{R}^{d_1}}
\frac{\mu_1(dx)p_{1}(x,y)}
{\int_{\mathbb{R}^{d_1}}h_1(z)p_{1}(x,z)dz}=h_1(y)
\mathcal{I}_1\left(\frac{1}{h_1(0,\cdot)}\right)(y),
\quad y\in \mathbb{R}^{d_1},\label{3.3}
\end{equation}
\begin{align}\label{3.4.0}
\pi_{\mu_1,\nu_1}(y,dx)
=&\frac{p_{1}(x,y)}{\mathcal{I}_1\left(\frac{1}{h_1(0,\cdot)}\right)(y)h_1(0,x)}
\mu_1(dx)\in\mathcal{P}(\mathbb{R}^{d_1}),\quad y\in f_{\nu_1}^{-1}((0,\infty)).
\end{align}
\end{lemma}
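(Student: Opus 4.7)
Proof plan: I aim to produce a pointwise continuous representative of the solution $h_1$ from Theorem~\ref{thm1.1} by exploiting the regularity of $\mathcal{I}_1(1/h_1(0,\cdot))$ supplied by Lemma~\ref{lm3.1}(i), combined with an elementary convex--geometry fact.

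First, apply Theorem~\ref{thm1.1} with $(S,\mathbf{m}_1,\mathbf{m}_2,q)=(\mathbb{R}^{d_1},\mu_1,\nu_1,p_1)$: since $f_{\nu_1}$ is a density, the second marginal is absolutely continuous, so we obtain a Borel measurable $\widetilde h_1\ge 0$ such that (\ref{1.19}) holds up to a $dy$--null set. Set $\varphi(x):=1/\widetilde h_1(0,x)$ with $\widetilde h_1(0,x):=\int \widetilde h_1(z)p_1(x,z)dz$. Because $p_1>0$ by (A0, i) and $\widetilde h_1>0$ on a set of positive measure (forced by the a.e.\ identity and $\int f_{\nu_1}=1$), we have $\widetilde h_1(0,x)\in(0,\infty]$, so $\varphi$ is a well-defined nonnegative measurable function. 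Fatou's lemma with the continuous kernel $p_1$ yields that $\mathcal{I}_1(\varphi)$ is lower semicontinuous and pointwise positive; Lemma~\ref{lm3.1}(i) yields that its effective domain $D:=Dom(\mathcal{I}_1(\varphi))$ is convex and $\mathcal{I}_1(\varphi)$ is continuous on $\mathrm{int}(D)$.

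The key step is to show $U:=f_{\nu_1}^{-1}((0,\infty))\subset\mathrm{int}(D)$. Multiplying through (\ref{1.19}) gives $\widetilde h_1(y)\mathcal{I}_1(\varphi)(y)=f_{\nu_1}(y)$ for a.e.\ $y$, so on the open set $U$ the identity forces $\mathcal{I}_1(\varphi)<\infty$ a.e., i.e., $U\setminus D$ is Lebesgue null. Since $D$ has positive measure (hence nonempty interior, by the standard fact that convex sets of positive measure in $\mathbb{R}^n$ do), $D\cap U$ is dense in $U$, so $\overline D\supset U$; combining this with $\mathrm{int}(D)=\mathrm{int}(\overline D)$ for convex $D$ with nonempty interior yields $U\subset\mathrm{int}(D)$. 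Thus $\mathcal{I}_1(\varphi)$ is continuous and in $(0,\infty)$ on all of $U$. Now define the continuous representative
$$h_1(y):=\frac{f_{\nu_1}(y)}{\mathcal{I}_1(\varphi)(y)},$$
with the convention that the right-hand side is $0$ whenever $\mathcal{I}_1(\varphi)(y)=+\infty$. Since $h_1=\widetilde h_1$ a.e., $h_1(0,\cdot)=\widetilde h_1(0,\cdot)$ and $h_1$ remains a solution of (\ref{1.19}); on $U$ it is a ratio of continuous functions with positive denominator, hence continuous; at any $y$ with $f_{\nu_1}(y)=0$, continuity of $f_{\nu_1}$ and the lsc bound $\liminf_n\mathcal{I}_1(\varphi)(y_n)\ge\mathcal{I}_1(\varphi)(y)>0$ imply $h_1(y_n)\le f_{\nu_1}(y_n)/\liminf\mathcal{I}_1(\varphi)(y_n)\to 0=h_1(y)$. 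Therefore $h_1\in C(\mathbb{R}^{d_1})$, and (\ref{3.3}) now holds pointwise. Finally, for $y\in U$ the formula (\ref{3.4.0}) defines a positive measure whose total mass equals $\mathcal{I}_1(1/h_1(0,\cdot))(y)/\mathcal{I}_1(1/h_1(0,\cdot))(y)=1$, giving $\pi_{\mu_1,\nu_1}(y,\cdot)\in\mathcal{P}(\mathbb{R}^{d_1})$.

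The principal obstacle is the passage from almost-everywhere finiteness of $\mathcal{I}_1(\varphi)$ on the open set $U$ to its genuine continuity on $U$; this is where the convex-geometry fact $U\subset\mathrm{int}(D)$ does the essential work. Once this is in place, continuity of $h_1$ across $\{f_{\nu_1}=0\}$ follows cleanly from the positivity and lower semicontinuity of $\mathcal{I}_1(\varphi)$, and (\ref{3.3})--(\ref{3.4.0}) drop out of the construction.
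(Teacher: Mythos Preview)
Your proof is correct and follows essentially the same route as the paper. The paper omits the proof of Lemma~\ref{lm3.2}, stating that it proceeds identically to that of Lemma~\ref{lm3.4}; your argument matches that template: obtain an a.e.\ solution $\widetilde h_1$ from Theorem~\ref{thm1.1}, redefine $h_1:=f_{\nu_1}/\mathcal{I}_1(\varphi)$, invoke Lemma~\ref{lm3.1}(i) for continuity on $U=f_{\nu_1}^{-1}((0,\infty))$, and use lower semicontinuity and strict positivity of $\mathcal{I}_1(\varphi)$ (via Fatou) for continuity across $\{f_{\nu_1}=0\}$. One small difference worth noting: your passage from ``$U\setminus D$ is null'' to ``$U\subset\mathrm{int}(D)$'' via density and the identity $\mathrm{int}(D)=\mathrm{int}(\overline D)$ for convex $D$ with nonempty interior is more explicit than the corresponding step in the paper's proof of Lemma~\ref{lm3.4}, which asserts $f_{\nu_i}=0$ pointwise on $D^c$ from continuity and the a.e.\ statement---a step that itself tacitly relies on a convex-geometry fact about the thickness of $D^c$.
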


The following lemma plays a crucial role in the proof of Theorem \ref{thm2.1} and can be proven by
Lemma \ref{lm3.1} (see (\ref{2.3.1223})  for notation).

\begin{lemma}\label{lm3.3}
Suppose that (A0, i, ii), and (A1) hold.
Then for $h_1\in C(\mathbb{R}^{d_1})$ in Lemma \ref{lm3.2}, 
$f_{\nu_1}^{-1}((0,\infty))\ni y\mapsto 
\pi_{\mu_1,\nu_1}\otimes\mu_{2|1}(y, dx)\in \mathcal{P}(\mathbb{R}^{n_2})$
is weakly continuous, i.e.,
for any $\varphi\in C_0(\mathbb{R}^{n_2})$ and $y\in \mathbb{R}^{d_1}$ such that $f_{\nu_1}(y)>0$,
\begin{equation}
\lim_{z\to y}\int_{\mathbb{R}^{n_2}}\varphi(x)
\pi_{\mu_1,\nu_1}\otimes\mu_{2|1}(z, dx)
=\int_{\mathbb{R}^{n_2}}\varphi(x)\pi_{\mu_1,\nu_1}\otimes\mu_{2|1}(y, dx).
\end{equation}
\end{lemma}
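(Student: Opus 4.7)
The plan is to reduce the weak-continuity assertion to the continuity of a ratio of $\mathcal{I}_1$-integrals, to which Lemma \ref{lm3.1}(i) applies. By the definition in (\ref{2.3.1223}) and Fubini,
\begin{equation*}
\int_{\mathbb{R}^{n_2}}\varphi(x_1,x_2)\,\pi_{\mu_1,\nu_1}\otimes\mu_{2|1}(z,dx_1\,dx_2)
=\int_{\mathbb{R}^{d_1}}\Phi(x_1)\,\pi_{\mu_1,\nu_1}(z,dx_1),
\end{equation*}
where $\Phi(x_1):=\int_{\mathbb{R}^{d_2}}\varphi(x_1,x_2)\,\mu_2(dx_2|x_1)$ is bounded and Borel measurable in $x_1$ (by measurability of regular conditional probabilities), with $\|\Phi\|_\infty\le\|\varphi\|_\infty$. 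Decomposing $\Phi=\Phi^+-\Phi^-$ into bounded nonnegative Borel parts and invoking the explicit representation (\ref{3.4.0}) from Lemma \ref{lm3.2}, the right-hand side becomes
\begin{equation*}
\frac{\mathcal{I}_1(\Phi^+/h_1(0,\cdot))(z)-\mathcal{I}_1(\Phi^-/h_1(0,\cdot))(z)}{\mathcal{I}_1(1/h_1(0,\cdot))(z)}.
\end{equation*}

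It then remains to show that each of the three $\mathcal{I}_1$-quantities above is continuous at any $y$ with $f_{\nu_1}(y)>0$ and that the denominator is nonzero there. First, $h_1(0,\cdot)>0$ on $\mathbb{R}^{d_1}$: continuity of $f_{\nu_1}$ and (\ref{3.3}) force $h_1(y_0)>0$ at any point $y_0$ where $f_{\nu_1}(y_0)>0$ (such a point exists because $f_{\nu_1}$ is a probability density), hence $h_1>0$ on a neighborhood of $y_0$, and since $p_1>0$ by (A0, i), definition (\ref{3.12}) of $h_1(0,\cdot)$ yields strict positivity everywhere. Thus $\Phi^\pm/h_1(0,\cdot)$ and $1/h_1(0,\cdot)$ are nonnegative Borel measurable and Lemma \ref{lm3.1}(i) gives continuity of each $\mathcal{I}_1(\cdot)$ in the interior of its domain. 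The boundedness $0\le\Phi^\pm\le\|\varphi\|_\infty$ yields $Dom(\mathcal{I}_1(1/h_1(0,\cdot)))\subset Dom(\mathcal{I}_1(\Phi^\pm/h_1(0,\cdot)))$, so it suffices to check $y\in\mathrm{int}(Dom(\mathcal{I}_1(1/h_1(0,\cdot))))$. From (\ref{3.3}), $f_{\nu_1}=h_1\cdot\mathcal{I}_1(1/h_1(0,\cdot))$, so $\{f_{\nu_1}>0\}\subset Dom(\mathcal{I}_1(1/h_1(0,\cdot)))$ and $\mathcal{I}_1(1/h_1(0,\cdot))(y)>0$ (using $h_1(y)<\infty$). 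Since $f_{\nu_1}$ is continuous by (A1, ii), the set $\{f_{\nu_1}>0\}$ is open, hence contained in the interior of the domain. Combining these continuities with the strict positivity of the denominator at $y$ yields the claim.

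The main obstacle is this interior-of-domain check: absent the hypothesis $f_{\nu_1}(y)>0$, $y$ could genuinely be a boundary point of $Dom(\mathcal{I}_1(1/h_1(0,\cdot)))$, where Lemma \ref{lm3.1} gives no information. The hypothesis $f_{\nu_1}(y)>0$, combined with continuity of $f_{\nu_1}$ and the identity (\ref{3.3}), is precisely what promotes \emph{``$y$ in the domain''} to \emph{``$y$ in the interior of the domain''} and thereby unlocks Lemma \ref{lm3.1}(i).
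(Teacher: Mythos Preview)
Your proof is correct and follows essentially the same route as the paper's: both represent the integral against $\pi_{\mu_1,\nu_1}\otimes\mu_{2|1}(z,\cdot)$ as a ratio of $\mathcal{I}_1$-integrals via (\ref{3.4.0}), establish the domain inclusion $f_{\nu_1}^{-1}((0,\infty))\subset Dom(\mathcal{I}_1(1/h_1(0,\cdot)))\subset Dom(\mathcal{I}_1(\Phi/h_1(0,\cdot)))$ from (\ref{3.3}) and the boundedness of $\Phi$, use the openness of $f_{\nu_1}^{-1}((0,\infty))$ from (A1) to land in the interior, and then invoke Lemma~\ref{lm3.1}(i). The only cosmetic difference is that the paper reduces to nonnegative $\varphi$ at the outset, while you decompose $\Phi=\Phi^+-\Phi^-$ after integrating out the $\mathbb{R}^{d_2}$-variable; your version also spells out the positivity of $h_1(0,\cdot)$ and of the denominator, which the paper leaves implicit.
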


\begin{proof}
We only have to consider the case where $\varphi\not\equiv 0$ and $\varphi\ge 0$.
\begin{align}
&\int_{\mathbb{R}^{n_2}}\varphi(x)\pi_{\mu_1,\nu_1}\otimes\mu_{2|1}(z, dx)\\
=&\frac{1}{\mathcal{I}_1\left(\frac{1}{h_1(0,\cdot)}\right)(z)}
\mathcal{I}_1\left(\frac{1}{h_1(0,\cdot)}\int_{\mathbb{R}^{d_2}}\varphi(\cdot,x)\mu_2(dx|\cdot)\right)(z),\quad z\in f_{\nu_1}^{-1}((0,\infty)).\notag
\end{align}
Indeed, from  (\ref{2.3.1223}) 
 and (\ref{3.4.0}),
$$\pi_{\mu_1,\nu_1}\otimes\mu_{2|1}(z, d{\bf x}_{n_2})
=\frac{p_1({\bf x}_{n_1},z)}{h_1(0,{\bf x}_{n_1})\mathcal{I}_1\left(\frac{1}{h_1(0,\cdot)}\right)(z)}\mu_1(d{\bf x}_{n_1})\mu_2(d{\bf x}_{[n_{1}+1,n_2]}|{\bf x}_{n_1}).
$$
From (\ref{3.3}) and the boundedness of $\varphi$,
\begin{equation}\label{3.81}
f_{\nu_1}^{-1}((0,\infty))\subset 
Dom\left(\mathcal{I}_1\left(\frac{1}{h_1(0,\cdot)}\right)\right)
\subset
Dom\left(\mathcal{I}_1\left(\frac{1}{h_1(0,\cdot)}\int_{\mathbb{R}^{d_2}}\varphi(\cdot,x)\mu_2(dx|\cdot)\right)\right).
\end{equation}
Since $f_{\nu_1}^{-1}((0,\infty))$ is an open set from (A1), the proof is over from Lemma \ref{lm3.1}, (i).
\end{proof}

{\color{black}Lemmas \ref{lm3.4} and \ref{lm3.7} play a crucial role in the proof of Theorem \ref{thm2.1}.
Since Theorem \ref{thm2.1} will be proven after we prove Proposition \ref{pp2.1}, 
we suppose that Proposition \ref{pp2.1} holds in Lemmas \ref{lm3.4} and \ref{lm3.7}.}

We recall  (\ref{3.12})--(\ref{2.5}) for notation. For $({\bf x}_{n_i},{\bf y}_{n_{i-1}})\in \mathbb{R}^{n_i}\times \mathbb{R}^{n_{i-1}}$, let
\begin{align}
\phi_i({\bf x}_{n_i},{\bf y}_{n_{i-1}}):=&
\frac{1}{h_i(0, {\bf x}_{n_i}, {\bf y}_{n_{i-1}})}\prod_{j=1}^{i-1}\frac{h_j ({\bf y}_{n_j})p_j({\bf x}_{n_j},{\bf y}_{n_j})}{h_j (0, {\bf x}_{n_j}, {\bf y}_{n_{j-1}})}.
\end{align}

\begin{lemma}\label{lm3.4}
Suppose that  (A0)--(A1) hold and that Proposition \ref{pp2.1} holds.
Then for $i=2,\cdots,k_0$ and ${\bf y}_{n_{i-1}}\in \mathbb{R}^{n_{i-1}}$ such that $f_{\nu_{i-1}}({\bf y}_{n_{i-1}})>0$, 
there exists a solution $h_i({\bf y}_{n_{i-1}}, \cdot)$ of (\ref{2.10}) such that $h_i({\bf y}_{n_{i-1}}, \cdot)\in C(\mathbb{R}^{d_i})$
and that
\begin{equation}\label{3.110}
f_{\nu_i}({\bf y}_{n_{i-1}}, y)=h_i({\bf y}_{n_{i-1}}, y)\mathcal{I}_i(\phi_i(\cdot,{\bf y}_{n_{i-1}}))({\bf y}_{n_{i-1}}, y),\quad y\in \mathbb{R}^{d_i}.
\end{equation}
In particular, $\pi_{\mu_{i},\nu_{i}}({\bf y}_{n_i}, dx)\in \mathcal{P}(\mathbb{R}^{n_{i}})$ for ${\bf y}_{n_i}$ such that
$f_{\nu_{i}}({\bf y}_{n_i})>0$.

\end{lemma}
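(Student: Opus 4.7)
The plan is to proceed by induction on $i=2,\ldots,k_0$, where at each level I first invoke Proposition~\ref{pp2.1} to produce a solution and then refine it into a continuous one by an explicit formula. The base of the induction (to supply the hypothesis $\pi_{\mu_1,\nu_1}({\bf y}_{n_1},d{\bf x}_{n_1})\in \mathcal{P}(\mathbb{R}^{n_1})$ needed for $i=2$) is already provided by Lemma~\ref{lm3.2}; the inductive step is exactly the last assertion of the present lemma. Thus, fix $i$, assume $\pi_{\mu_{i-1},\nu_{i-1}}({\bf y}_{n_{i-1}},\cdot)\in\mathcal{P}(\mathbb{R}^{n_{i-1}})$, and apply Proposition~\ref{pp2.1} to obtain a solution $h_i({\bf y}_{n_{i-1}},\cdot)$ of (\ref{2.10}), which is unique up to a factor depending only on ${\bf y}_{n_{i-1}}$.

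Next I would derive (\ref{3.110}) as an algebraic consequence of (\ref{2.10}). Writing $p_i(x,y|{\bf y}_{n_{i-1}})=p_i(x,({\bf y}_{n_{i-1}},y))/\int p_i(x,({\bf y}_{n_{i-1}},w))\,dw$ and using the definition (\ref{3.12}) of $h_i(0,x,{\bf y}_{n_{i-1}})$, one finds
\[
\frac{p_i(x,y|{\bf y}_{n_{i-1}})}{\int h_i({\bf y}_{n_{i-1}},z)p_i(x,z|{\bf y}_{n_{i-1}})\,dz}
=\frac{p_i(x,({\bf y}_{n_{i-1}},y))}{h_i(0,x,{\bf y}_{n_{i-1}})}.
\]
Multiplying (\ref{2.10}) by $f_{\nu_{i-1}}({\bf y}_{n_{i-1}})$ and using (\ref{2.5.1}) together with the identity $\phi_i(x,{\bf y}_{n_{i-1}})\,\mu_i(dx)=\{f_{\nu_{i-1}}({\bf y}_{n_{i-1}})/h_i(0,x,{\bf y}_{n_{i-1}})\}\,\pi_{\mu_{i-1},\nu_{i-1}}\otimes\mu_{i|i-1}({\bf y}_{n_{i-1}},dx)$ (a direct consequence of (\ref{2.5}) and (\ref{2.3.1223})), one recognizes the right-hand side as $h_i({\bf y}_{n_{i-1}},y)\,\mathcal{I}_i(\phi_i(\cdot,{\bf y}_{n_{i-1}}))({\bf y}_{n_{i-1}},y)$, giving (\ref{3.110}).

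The main obstacle is to select a representative that is continuous on all of $\mathbb{R}^{d_i}$. I would proceed as in the proof of Lemma~\ref{lm3.3}: define
\[
h_i({\bf y}_{n_{i-1}},y):=\begin{cases}\dfrac{f_{\nu_i}({\bf y}_{n_{i-1}},y)}{\mathcal{I}_i(\phi_i(\cdot,{\bf y}_{n_{i-1}}))({\bf y}_{n_{i-1}},y)},& f_{\nu_i}({\bf y}_{n_{i-1}},y)>0,\\[1ex] 0, & \text{otherwise}.\end{cases}
\]
On the open set $\{y:f_{\nu_i}({\bf y}_{n_{i-1}},y)>0\}$ (open by (A1,~ii)), the denominator is finite by (\ref{3.110}), strictly positive since $p_i>0$, and continuous by Lemma~\ref{lm3.1}(ii); the numerator is continuous by (A1,~ii); hence $h_i({\bf y}_{n_{i-1}},\cdot)$ is continuous there. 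On the complement $h_i=0$ trivially. Continuity at a boundary point $y_0$ of the positivity set follows because $f_{\nu_i}({\bf y}_{n_{i-1}},y)\to 0$ while, by Fatou's lemma combined with the strict positivity of $p_i$ on compacts and positivity of $\phi_i$, $\liminf_{y\to y_0}\mathcal{I}_i(\phi_i(\cdot,{\bf y}_{n_{i-1}}))({\bf y}_{n_{i-1}},y)$ is bounded below by a positive constant, so the quotient tends to $0$. By uniqueness in Proposition~\ref{pp2.1}, this representative still solves (\ref{2.10}).

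Finally, for the probability statement, I would just compute the total mass of $\pi_{\mu_i,\nu_i}({\bf y}_{n_i},d{\bf x}_{n_i})$ directly from (\ref{2.5}). Grouping the first $i-1$ factors of the product into $\phi_i$ and using $\prod_{j=1}^i\frac{h_j p_j}{h_j(0)}=h_i({\bf y}_{n_i})\,p_i({\bf x}_{n_i},{\bf y}_{n_i})\,\phi_i({\bf x}_{n_i},{\bf y}_{n_{i-1}})$, the mass equals $h_i({\bf y}_{n_i})\,\mathcal{I}_i(\phi_i(\cdot,{\bf y}_{n_{i-1}}))({\bf y}_{n_{i-1}},y_i)/f_{\nu_i}({\bf y}_{n_i})$, which is $1$ by (\ref{3.110}) whenever $f_{\nu_i}({\bf y}_{n_i})>0$. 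This closes the induction and completes the proof.
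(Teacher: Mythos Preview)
Your proof is correct and follows the same route as the paper: induction started from Lemma~\ref{lm3.2}, invoke Proposition~\ref{pp2.1} for existence, rewrite (\ref{2.10}) as (\ref{3.110}) a.e., redefine $h_i$ as the quotient $f_{\nu_i}/\mathcal{I}_i(\phi_i)$, obtain continuity on the open set $\{f_{\nu_i}>0\}$ from Lemma~\ref{lm3.1}(ii), and handle the boundary via Fatou together with strict positivity of $\mathcal{I}_i$. The one point the paper makes explicit that you leave implicit is that $\phi_i$ itself depends on $h_i$ through $h_i(0,\cdot,{\bf y}_{n_{i-1}})$, so when you pass to the continuous representative you must first note that the old and new $h_i$ agree $dy$-a.e.\ (hence $h_i(0,\cdot)$, $\phi_i$, and $\mathcal{I}_i(\phi_i)$ are unchanged); without this your quotient formula is circular, and ``by uniqueness in Proposition~\ref{pp2.1}'' is invoked in the wrong logical direction.
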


\begin{proof}
We prove (\ref{3.110}) by induction.
From 
Lemma \ref{lm3.2}, there exists a continuous solution $h_1$ of (\ref{1.19}) such that 
for ${\bf y}_{n_1}\in \mathbb{R}^{n_{1}}$ for which $f_{\nu_1}({\bf y}_{n_1})>0$, 
$\pi_{\mu_{1},\nu_{1}}({\bf y}_{n_1}, dx)\in \mathcal{P}(\mathbb{R}^{n_{1}})$. 
Suppose that $\pi_{\mu_{i-1},\nu_{i-1}}({\bf y}_{n_{i-1}}, dx)\in \mathcal{P}(\mathbb{R}^{n_{i-1}})$ for ${\bf y}_{n_{i-1}}\in \mathbb{R}^{n_{i-1}}$ such that $f_{\nu_{i-1}}({\bf y}_{n_{i-1}})>0$. 
Then there exists a solution $h_i({\bf y}_{n_{i-1}}, \cdot)$  of (\ref{2.10}) from Proposition \ref{pp2.1}
and the equality in (\ref{3.110}) holds $dy{\rm -a.e.}$ on $\mathbb{R}^{d_i}$ (see (\ref{2.3.1223}) for notation). 
The following also holds: 
\begin{equation}\label{3.10}
\mathcal{I}_i(\phi_i(\cdot,{\bf y}_{n_{i-1}}))( {\bf y}_{n_{i-1}},y)>0,\quad y\in\mathbb{R}^{d_i},
\end{equation}
since $p$ is positive.
Otherwise, $\phi_i({\bf x}_{n_i},{\bf y}_{n_{i-1}})=0$, $\mu_i(d{\bf x}_{n_i})$--a.e., which implies
that 
$$\mathcal{I}_i(\phi_i(\cdot,{\bf y}_{n_{i-1}}))( {\bf y}_{n_{i-1}},y)=0,\quad y\in\mathbb{R}^{d_i},
$$
$f_{\nu_i}({\bf y}_{n_{i-1}},y)=0$, $dy{\rm -a.e.}$  on $\mathbb{R}^{d_i}$,
 and hence $f_{\nu_{i-1}}({\bf y}_{n_{i-1}})=0$.

Let
\begin{equation}\label{3.11}
\overline h_i({\bf y}_{n_{i-1}},y):=\frac{f_{\nu_i}({\bf y}_{n_{i-1}},y)}{\mathcal{I}_i(
\phi_i(\cdot,{\bf y}_{n_{i-1}}))( {\bf y}_{n_{i-1}},y)}, \quad 
y\in \mathbb{R}^{d_i}.
\end{equation}
We show that (\ref{3.110}) with $h_i$ replaced by $\overline h_i$ holds, which implies that $\pi_{\mu_{i},\nu_{i}}({\bf y}_{n_i}, dx)\in \mathcal{P}(\mathbb{R}^{n_{i}})$ for ${\bf y}_{n_i}\in \mathbb{R}^{n_{i}}$ such that $f_{\nu_{i}}({\bf y}_{n_i})>0$. 
Since (\ref{3.110}) holds $dy{\rm -a.e.}$ on $\mathbb{R}^{d_i}$,
$$\overline h_i({\bf y}_{n_{i-1}},y)=h_i({\bf y}_{n_{i-1}},y), \quad dy\hbox{--a.e. on }Dom (\mathcal{I}_i(\phi_i(\cdot,{\bf y}_{n_{i-1}}))( {\bf y}_{n_{i-1}},\cdot)),$$
$$\overline h_i({\bf y}_{n_{i-1}},y)=0=h_i({\bf y}_{n_{i-1}}, y), \quad dy\hbox{--a.e. on }Dom (\mathcal{I}_i(\phi_i(\cdot,{\bf y}_{n_{i-1}}))( {\bf y}_{n_{i-1}},\cdot))^c,$$
which implies that 
$$\phi_i({\bf x}_{n_i},{\bf y}_{n_{i-1}})=\overline \phi_i({\bf x}_{n_i},{\bf y}_{n_{i-1}}),\quad ({\bf x}_{n_i},{\bf y}_{n_{i-1}})\in \mathbb{R}^{n_i}\times\mathbb{R}^{n_{i-1}},$$
where $\overline \phi_i$ denotes $\phi_i$ with $h_i$ replaced by $\overline h_i$ (see (\ref{3.12})).
From (\ref{3.11}),
$\overline h_i({\bf y}_{n_{i-1}},\cdot)$ satisfies the equality in (\ref{3.110}) with $h_i=\overline h_i$ on  $Dom (\mathcal{I}_i(\phi_i(\cdot,{\bf y}_{n_{i-1}}))( {\bf y}_{n_{i-1}},\cdot))$.
From (\ref{3.11}), 
\begin{equation}
f_{\nu_i}({\bf y}_{n_{i-1}},y)=\overline h_i({\bf y}_{n_{i-1}},y)=0,\quad y\in Dom (\mathcal{I}_i(\phi_i(\cdot,{\bf y}_{n_{i-1}}))( {\bf y}_{n_{i-1}},\cdot))^c,
\end{equation}
since (\ref{3.110}) holds $dy{\rm -a.e.}$   on $\mathbb{R}^{d_i}$, and 
$$f_{\nu_i}({\bf y}_{n_{i-1}},y)=h_i({\bf y}_{n_{i-1}},y)=0 \quad dy\hbox{--a.e. on }Dom (\mathcal{I}_i(\phi_i(\cdot,{\bf y}_{n_{i-1}}))( {\bf y}_{n_{i-1}},\cdot))^c$$
and since $f_{\nu_i}$ is continuous from (A1).

In the rest of the proof, we replace $h_i$  by $\overline h_i$ in  (\ref{3.110})--(\ref{3.10}), and show that  $\overline h_i({\bf y}_{n_{i-1}},\cdot)\in C(\mathbb{R}^{d_i})$ for ${\bf y}_{n_{i-1}}\in \mathbb{R}^{n_{i-1}}$ such that $f_{\nu_{i-1}}({\bf y}_{n_{i-1}})>0$.
First, we  show that $\overline  h_i({\bf y}_{n_{i-1}}, \cdot)$ is continuous in $f_\nu({\bf y}_{n_{i-1}}, \cdot)^{-1}((0,\infty))$.
From  (\ref{3.110}), 
\begin{equation}
f_{\nu_i}({\bf y}_{n_{i-1}}, \cdot)^{-1}((0,\infty))\subset Dom (\mathcal{I}_i(\overline \phi_i(\cdot,{\bf y}_{n_{i-1}}))( {\bf y}_{n_{i-1}},\cdot)).
\end{equation}
From (A1), $ f_{\nu_i}({\bf y}_{n_{i-1}}, \cdot)^{-1}((0,\infty))$ is an open set.
$\mathcal{I}_i(\overline \phi_i(\cdot,{\bf y}_{n_{i-1}}))( {\bf y}_{n_{i-1}},\cdot)$ is continuous in the interior of $Dom (\mathcal{I}_i(\overline \phi_i(\cdot,{\bf y}_{n_{i-1}}))( {\bf y}_{n_{i-1}},\cdot))$ from Lemma \ref{lm3.1}, (ii).
In particular, from (\ref{3.11}), $\overline  h_i({\bf y}_{n_{i-1}}, \cdot)$ is continuous in $f_\nu({\bf y}_{n_{i-1}}, \cdot)^{-1}((0,\infty))$.

If $f_{\nu_i}({\bf y}_{n_{i-1}}, y)=0$, 
then $\overline  h_i({\bf y}_{n_{i-1}},y)=0$ from (\ref{3.11}).
Let $\mathbb{R}^{d_i}\ni y_{n}\to y, n\to\infty$. 
The following together with (\ref{3.10}) completes the proof: from (A1) and  (\ref{3.110}), by Fatou's lemma,
\begin{align}
&\lim_{n\to\infty}f_{\nu_i}({\bf y}_{n_{i-1}},y_{n})=f_{\nu_i}({\bf y}_{n_{i-1}},y)=0\\
\ge &\limsup_{n\to\infty} \overline h_i({\bf y}_{n_{i-1}},y_n)
\times \liminf_{n\to\infty} \mathcal{I}_i(\overline \phi_i(\cdot,{\bf y}_{n_{i-1}}))({\bf y}_{n_{i-1}},y_n)\notag\\
\ge &\limsup_{n\to\infty} \overline h_i({\bf y}_{n_{i-1}},y_n)
\times \mathcal{I}_i(\overline \phi_i(\cdot,{\bf y}_{n_{i-1}}))({\bf y}_{n_{i-1}}, y).\notag
\end{align}
\end{proof}

{\color{black}
We recall that for a metric space $S$, ${P}(S)$ is endowed with weak topology.
Since ${\bf m}(dx\hbox{ }dy)$ in Theorem \ref{thm1.1} is uniquely determined by $(q,{\bf m}_1,{\bf m}_2)$, we
write ${\bf m}(dx\hbox{ }dy)={\bf m}(dx\hbox{ }dy;q,{\bf m}_1,{\bf m}_2)$.
The following lemma is made use of in the proof of Lemma \ref{lm3.7}
and is given for the sake of readers' convenience.

\begin{lemma}[see \cite{mikami2021}, Theorem 2.1]\label{lm3.6.0}
Suppose that $S$ is a complete $\sigma$-compact metric space, and that 
$q, q_n\in C(S \times S;(0,\infty))$, ${\bf m}_i, {\bf m}_{i,n}\in \mathcal {P}(S)$, 
$n\ge 1$,  $i=1,2$ and 
\begin{align*} 
\lim_{n\to\infty}q_n=&q,\quad \hbox{locally uniformly},\\
\lim_{n\to\infty}{\bf m}_{1,n}\times {\bf m}_{2,n}=&{\bf m}_{1}\times {\bf m}_{2},\quad \hbox{weakly.}
\end{align*}
Then 
\begin{equation*} 
\lim_{n\to\infty}{\bf m}(dx\hbox{ }dy;q_n,{\bf m}_{1,n},{\bf m}_{2,n})={\bf m}(dx\hbox{ }dy;q,{\bf m}_1,{\bf m}_2),\quad \hbox{weakly.}
\end{equation*}
\end{lemma}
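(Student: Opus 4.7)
The strategy is a compactness plus uniqueness argument, with Theorem~\ref{thm1.1} used to identify any subsequential limit. Set ${\bf m}_n:={\bf m}(dx\,dy;q_n,{\bf m}_{1,n},{\bf m}_{2,n})$. First I would establish tightness of $\{{\bf m}_n\}$: since ${\bf m}_{1,n}\times {\bf m}_{2,n}\to {\bf m}_1\times {\bf m}_2$ weakly, each marginal sequence $\{{\bf m}_{i,n}\}$ is tight by Prokhorov's theorem, so for every $\varepsilon>0$ one can pick compacts $K_i\subset S$ with $\sup_n {\bf m}_{i,n}(K_i^c)<\varepsilon/2$; then ${\bf m}_n((K_1\times K_2)^c)\le {\bf m}_{1,n}(K_1^c)+{\bf m}_{2,n}(K_2^c)<\varepsilon$, giving tightness of $\{{\bf m}_n\}$ on $S\times S$. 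Any subsequence therefore contains a further subsequence ${\bf m}_{n_k}\to {\bf m}^*$ weakly, and continuity of the projection maps ensures ${\bf m}^*\in \mathcal{A}({\bf m}_1,{\bf m}_2)$.

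The heart of the argument is checking that ${\bf m}^*$ still factorizes as ${\bf m}^*(dx\,dy)=q(x,y){\bf n}_1(dx){\bf n}_2(dy)$ for some $\sigma$--finite ${\bf n}_1,{\bf n}_2$; once that is secured, the uniqueness clause in Theorem~\ref{thm1.1} forces ${\bf m}^*={\bf m}(dx\,dy;q,{\bf m}_1,{\bf m}_2)$, and since every subsequential limit agrees, the full sequence ${\bf m}_n$ converges weakly to this target. For the factorization, write ${\bf m}_{n_k}=q_{n_k}\,{\bf n}_{1,n_k}\otimes {\bf n}_{2,n_k}$. Because the Jamison factorization is only unique modulo the gauge $({\bf n}_1,{\bf n}_2)\mapsto (c\,{\bf n}_1,c^{-1}{\bf n}_2)$, I would fix a compact set $B\subset S$ meeting $\mathrm{supp}({\bf m}_2)$ and normalize ${\bf n}_{2,n_k}(B)=1$ for all large $k$. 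Using the marginal identity
\begin{equation*}
\Bigl(\int_S q_{n_k}(x,y)\,{\bf n}_{2,n_k}(dy)\Bigr){\bf n}_{1,n_k}(dx)={\bf m}_{1,n_k}(dx)
\end{equation*}
together with the positive lower bound on $q_{n_k}$ over compact rectangles (inherited from the locally uniform convergence $q_{n_k}\to q>0$), I would derive uniform local mass bounds on ${\bf n}_{1,n_k}$ and, symmetrically, on ${\bf n}_{2,n_k}$. Passing to a further subsequence gives limits ${\bf n}_{i,n_k}\to {\bf n}_i$ on compact subsets for some $\sigma$--finite ${\bf n}_i$. Testing ${\bf m}_{n_k}$ against $\varphi\otimes \psi$ with $\varphi,\psi\in C_c(S)$ and exploiting the locally uniform convergence of $q_{n_k}$ then yields
\begin{equation*}
\int \varphi(x)\psi(y)\,{\bf m}^*(dx\,dy)=\int \varphi(x)\psi(y)\,q(x,y){\bf n}_1(dx){\bf n}_2(dy),
\end{equation*}
which, by a monotone class argument, upgrades to the required factorization.

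The principal obstacle will be the extraction step: the factors ${\bf n}_{i,n_k}$ are only $\sigma$--finite, their two normalizations are coupled through a single multiplicative gauge, and one must arrange that neither factor loses mass to infinity nor collapses on the chosen reference set. The positive lower bound of $q$ on compacts together with the weak convergence of the marginals provides the necessary coercivity that localizes both factors, but the coupling of the normalizations has to be handled symmetrically so that the product ${\bf n}_{1,n_k}\otimes{\bf n}_{2,n_k}$ really converges on compact rectangles to ${\bf n}_1\otimes {\bf n}_2$; once this uniform local control is in place, combining it with the locally uniform convergence of $q_{n_k}$ reduces the remainder of the proof to routine bookkeeping.
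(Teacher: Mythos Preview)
The paper does not prove this lemma at all: it is quoted verbatim from \cite{mikami2021} (Theorem~2.1 there) and is included, in the paper's own words, ``for the sake of readers' convenience,'' with the proof deferred entirely to that reference. There is therefore no in-paper argument to compare your proposal against.

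On its own merits, your compactness--plus--uniqueness scheme is the natural strategy and the broad outline is sound: tightness of $\{{\bf m}_n\}$ via the marginals, extraction of a subsequential limit ${\bf m}^*\in\mathcal{A}({\bf m}_1,{\bf m}_2)$, and identification of ${\bf m}^*$ through the uniqueness clause of Theorem~\ref{thm1.1} are all correct. You have also correctly located the only genuine difficulty, namely the extraction of limits for the $\sigma$--finite factors ${\bf n}_{i,n_k}$. One caution about your sketch: after normalizing ${\bf n}_{2,n_k}(B)=1$ the situation is no longer symmetric, so the phrase ``symmetrically, on ${\bf n}_{2,n_k}$'' hides real work. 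The normalization together with the local lower bound on $q_{n_k}$ gives a uniform \emph{upper} bound on ${\bf n}_{1,n_k}$ over compacts, but to bound ${\bf n}_{2,n_k}$ on compacts you first need a uniform \emph{lower} bound on ${\bf n}_{1,n_k}(A)$ for some fixed compact $A$; this comes from the two--sided estimate $c\,{\bf n}_{1,n_k}(A)\le {\bf m}_{n_k}(A\times B)\le C\,{\bf n}_{1,n_k}(A)$, provided you can show $\liminf_k {\bf m}_{n_k}(A\times B)>0$. That last point is not automatic from the marginal constraints alone (a generic coupling of ${\bf m}_1,{\bf m}_2$ can avoid $A\times B$), so you must exploit the product structure of ${\bf m}_{n_k}$ itself---for instance by choosing $A,B$ with ${\bf m}_1(\mathrm{int}\,A),{\bf m}_2(\mathrm{int}\,B)>0$ and using ${\bf m}_{n_k}(A\times B)\ge c\,{\bf n}_{1,n_k}(A)\,{\bf n}_{2,n_k}(B)=c\,{\bf n}_{1,n_k}(A)$ together with an independent lower bound on ${\bf n}_{1,n_k}(A)$ obtained from the second marginal identity restricted to $B$. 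Once this bootstrapping is made explicit, the remainder of your plan goes through.
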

}


\begin{lemma}\label{lm3.7}
Suppose that  (A0)--(A1) hold and that Proposition \ref{pp2.1} holds.
Then for $h_1\in C(\mathbb{R}^{d_1})$ in Lemma \ref{lm3.2}, 
\begin{equation}\label{3.25.1222}
f_{\nu_1}^{-1}((0,\infty))\ni {\bf y}_{n_1}\mapsto \pi_{\mu_2,\nu_2}({\bf y}_{n_1},dx\hbox{ }dy)\in \mathcal{P}(\mathbb{R}^{n_2}\times \mathbb{R}^{d_2})
\end{equation}
is weakly continuous (see (\ref{2.12weakly continuous}) for definition). 
For $i=2,\cdots, k_0$ and $h_i$ in Lemma \ref{lm3.4},
the following is 
measurable:
for a bounded Borel measurable function $\varphi:\mathbb{R}^{n_i}\times \mathbb{R}^{n_i}\rightarrow\mathbb{R}$,
\begin{equation}\label{3.19}
f_{\nu_{i-1}}^{-1}((0,\infty))\ni {\bf y}_{n_{i-1}}\mapsto 
\int_{\mathbb{R}^{n_i}\times \mathbb{R}^{d_i}}\varphi(x,({\bf y}_{n_{i-1}},y))\pi_{\mu_i,\nu_i}({\bf y}_{n_{i-1}},dx\hbox{ }dy).
\end{equation}
\end{lemma}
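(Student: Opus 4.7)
The plan is to handle the weak continuity (\ref{3.25.1222}) and measurability (\ref{3.19}) in turn, with Lemma \ref{lm3.6.0} as the main engine in both. For (\ref{3.25.1222}), I would interpret $\pi_{\mu_2,\nu_2}({\bf y}_{n_1},dx\,dy)$ via (\ref{2.7.1}) and Proposition \ref{pp2.1} as the Schr\"odinger measure ${\bf m}(dx\,dy;q,{\bf m}_1,{\bf m}_2)$ of Theorem \ref{thm1.1} with kernel $q(x,y)=p_2(x,({\bf y}_{n_1},y))$ and marginals ${\bf m}_1=\pi_{\mu_1,\nu_1}\otimes\mu_{2|1}({\bf y}_{n_1},dx)$ and ${\bf m}_2=f_{\nu_2}(y|{\bf y}_{n_1})\,dy$, the identification being enforced by uniqueness in Theorem \ref{thm1.1}. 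As ${\bf y}_{n_1}$ varies continuously inside $f_{\nu_1}^{-1}((0,\infty))$, the three Schr\"odinger inputs behave as required by Lemma \ref{lm3.6.0}: the kernel converges locally uniformly by joint continuity of $p_2$ from (A0,i); ${\bf m}_1$ converges weakly by Lemma \ref{lm3.3}; and ${\bf m}_2$ converges weakly because (A1,ii) together with positivity of $f_{\nu_1}$ give pointwise convergence of the density $f_{\nu_2}(y|{\bf y}_{n_1})=f_{\nu_2}({\bf y}_{n_1},y)/f_{\nu_1}({\bf y}_{n_1})$, so Scheff\'e's lemma yields $L^1$ (hence weak) convergence. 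Lemma \ref{lm3.6.0} then delivers (\ref{2.12weakly continuous}).

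For (\ref{3.19}) at $i=2$, the weak continuity just established, combined with joint continuity and boundedness of $(x,y,{\bf y}_{n_1})\mapsto\varphi(x,({\bf y}_{n_1},y))$ for bounded continuous $\varphi$, yields continuity of the map in (\ref{3.19}); a standard monotone-class argument then extends measurability to all bounded Borel measurable $\varphi$. For $i\geq 3$, I would establish Borel measurability of ${\bf y}_{n_{i-1}}\mapsto\pi_{\mu_i,\nu_i}({\bf y}_{n_{i-1}},\cdot)\in\mathcal{P}(\mathbb{R}^{n_i}\times\mathbb{R}^{d_i})$ directly via Lemma \ref{lm3.6.0}: by the Polish-space disintegration theorem applied to $\pi_{\mu_{i-1},\nu_{i-1}}$, there is a Borel measurable version of ${\bf y}_{n_{i-1}}\mapsto\pi_{\mu_{i-1},\nu_{i-1}}({\bf y}_{n_{i-1}},d{\bf x}_{n_{i-1}})$, which composed with the fixed kernel $\mu_i(d{\bf x}_{[n_{i-1}+1,n_i]}|{\bf x}_{n_{i-1}})$ makes the first Schr\"odinger marginal ${\bf m}_1({\bf y}_{n_{i-1}})=\pi_{\mu_{i-1},\nu_{i-1}}\otimes\mu_{i|i-1}({\bf y}_{n_{i-1}},dx)$ Borel measurable; the kernel $p_i(x,({\bf y}_{n_{i-1}},y))$ and second marginal $f_{\nu_i}(y|{\bf y}_{n_{i-1}})\,dy$ still depend continuously on ${\bf y}_{n_{i-1}}$ by (A0,i) and (A1). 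Since Lemma \ref{lm3.6.0} makes the Schr\"odinger map $(q,{\bf m}_1,{\bf m}_2)\mapsto{\bf m}$ sequentially continuous and therefore Borel measurable, composing with the Borel measurable input gives the required kernel measurability of ${\bf y}_{n_{i-1}}\mapsto\pi_{\mu_i,\nu_i}({\bf y}_{n_{i-1}},\cdot)$, and (\ref{3.19}) then follows for all bounded Borel measurable $\varphi$ by standard kernel integration and a monotone-class argument.

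The main obstacle will be reconciling the explicit formula (\ref{2.5}) for $\pi_{\mu_{i-1},\nu_{i-1}}({\bf y}_{n_{i-1}},\cdot)$, whose dependence on ${\bf y}_{n_{i-1}}$ through $h_{i-1}$ is not a priori jointly Borel measurable, with the Borel measurable disintegration needed to invoke Lemma \ref{lm3.6.0}. Since both versions agree $f_{\nu_{i-1}}({\bf y}_{n_{i-1}})\,d{\bf y}_{n_{i-1}}$-a.e., and since Proposition \ref{pp2.1} leaves $h_i({\bf y}_{n_{i-1}},\cdot)$ free up to a multiplicative function of ${\bf y}_{n_{i-1}}$, one may adopt a jointly Borel measurable representative of each $h_j$ throughout without disturbing any earlier construction, after which the argument above closes.
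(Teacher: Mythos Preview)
Your approach for (\ref{3.25.1222}) and for $i=2$ matches the paper's, with one technical omission: Theorem~\ref{thm1.1} and Lemma~\ref{lm3.6.0} as stated require both marginals to lie in $\mathcal{P}(S)$ for a \emph{single} space $S$, whereas your ${\bf m}_1$ lives on $\mathbb{R}^{n_i}$ and ${\bf m}_2$ on $\mathbb{R}^{d_i}$. The paper remedies this by tensoring the second marginal with a fixed positive continuous density $q_{i-1}$ on $\mathbb{R}^{n_{i-1}}$, so that both marginals and the kernel $q_{i-1}(z)p_i(x,({\bf y}_{n_{i-1}},y))$ sit on $\mathbb{R}^{n_i}\times\mathbb{R}^{n_i}$; see (\ref{3.270}). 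This is cosmetic but needed to invoke the cited results as written.

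For $i\ge 3$ your route diverges from the paper's and contains a genuine gap. The paper proceeds by induction on the weak measurability of ${\bf y}_{n_{i-1}}\mapsto\pi_{\mu_{i-1},\nu_{i-1}}({\bf y}_{n_{i-1}},dx)$ (the pointwise-defined object from (\ref{2.5})): it uses Lusin's theorem to pass to closed sets $F_{i-1,n}$ where this map is continuous, applies Lemma~\ref{lm3.6.0} there to get measurability of (\ref{3.19}), and then runs a separate mollifier argument (\ref{3.34})--(\ref{3.35}) to upgrade to weak measurability in ${\bf y}_{n_i}$ and close the induction. You instead apply the disintegration theorem to the \emph{global} measure $\pi_{\mu_{i-1},\nu_{i-1}}$ on $\mathbb{R}^{n_{i-1}}\times\mathbb{R}^{n_{i-1}}$. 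But at this point in the paper that measure is defined through the density in (\ref{2.10.1}), which involves $h_1,\dots,h_{i-1}$; unless those are already jointly measurable the density is not measurable and there is no measure to disintegrate. Your final sentence proposes to ``adopt a jointly Borel measurable representative of each $h_j$'', but the existence of such representatives is exactly the content of Theorem~\ref{thm2.1}, whose proof relies on Lemma~\ref{lm3.7}---so as written the argument is circular. Your idea could be repaired by running an induction on the statement ``$\pi_{\mu_{j},\nu_{j}}$ is a well-defined Borel probability measure on $\mathbb{R}^{n_j}\times\mathbb{R}^{n_j}$'' (constructing each successive $\pi_{\mu_j,\nu_j}$ from the disintegrated Schr\"odinger output rather than through an explicit $h_j$), which would indeed bypass Lusin and the mollifier step; but that reorganization is not what you wrote, and the a.e.\ bookkeeping between the disintegration version and the explicit formula (\ref{2.5}) must be carried along explicitly at every stage.
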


\begin{proof}
We prove this lemma by induction.
From Lemma \ref{lm3.3}, the following is weakly continuous (see (\ref{2.3.1223})):
$$f_{\nu_{1}}^{-1}((0,\infty))\ni {\bf y}_{n_1}\mapsto \pi_{\mu_{1},\nu_{1}}({\bf y}_{n_1}, dx)
\in \mathcal{P}(\mathbb{R}^{n_1}).$$
Suppose that  the following is weakly 
measurable:
$$f_{\nu_{i-1}}^{-1}((0,\infty))\ni {\bf y}_{n_{i-1}}\mapsto \pi_{\mu_{i-1},\nu_{i-1}}({\bf y}_{n_{i-1}}, dx)\in \mathcal{P}(\mathbb{R}^{n_{i-1}}),$$
i.e., the following is measurable:  for any $\varphi \in C_0 (\mathbb{R}^{n_{i-1}})$,
\begin{align}\label{3.15}
f_{\nu_{i-1}}^{-1}((0,\infty))\ni{\bf y}_{n_{i-1}}\mapsto &\int_{\mathbb{R}^{n_{i-1}}}\varphi (x)\pi_{\mu_{i-1},\nu_{i-1}}({\bf y}_{n_{i-1}}, dx).
\end{align}
Let $q_{i-1}$ be a positive continuous probability density on $\mathbb{R}^{n_{i-1}}$.
We prove that in $f_{\nu_{i-1}}^{-1}((0,\infty))$, 
the following is continuous  when $i=2$ and is 
measurable  when $i\ne 2$:
for any $\varphi \in C_0 (\mathbb{R}^{n_i}\times \mathbb{R}^{n_i})$,
\begin{align}\label{3.21}
{\bf y}_{n_{i-1}}\mapsto &\int_{\mathbb{R}^{n_{i-1}}\times \mathbb{R}^{n_i}\times \mathbb{R}^{d_i}}
\varphi (x, (z,y))
q_{i-1}(z)dz\pi_{\mu_i,\nu_i}({\bf y}_{n_{i-1}},dx\hbox{ }dy).
\end{align}
From (\ref{2.10})--(\ref{2.7.1}) and Lemmas \ref{lm3.2} and \ref{lm3.4},  for ${\bf y}_{n_{i-1}}\in \mathbb{R}^{n_{i-1}}$ such that $f_{\nu_{i-1}}({\bf y}_{n_{i-1}})>0$, the following holds (see (\ref{3.12}) for notation):
\begin{align}
&q_{i-1}({\bf z}_{n_{i-1}})d{\bf z}_{n_{i-1}}\pi_{\mu_i,\nu_i}({\bf y}_{n_{i-1}},d{\bf x}_{n_i}\hbox{ }d{\bf y}_{[n_{i-1}+1,n_i]})\notag\\
=&
\frac{h_i({\bf y}_{n_{i-1}},{\bf y}_{[n_{i-1}+1,n_i]})q_{i-1}({\bf z}_{n_{i-1}}) p({\bf x}_{n_i},({\bf y}_{n_{i-1}},{\bf y}_{[n_{i-1}+1,n_i]}))d{\bf z}_{n_{i-1}}d{\bf y}_{[n_{i-1}+1,n_i]}}{\int_{\mathbb{R}^{n_{i-1}}\times \mathbb{R}^{d_i}}h_i({\bf y}_{n_{i-1}}, y)q_{i-1}(z) p({\bf x}_{n_i},({\bf y}_{n_{i-1}},y))dzdy}\notag\\
&\qquad\times\pi_{\mu_{i-1},\nu_{i-1}}\otimes\mu_{i|i-1}({\bf y}_{n_{i-1}}, d{\bf x}_{n_i})\notag\\
\in &\mathcal{A}(\pi_{\mu_{i-1},\nu_{i-1}}\otimes\mu_{i|i-1}({\bf y}_{n_{i-1}}, d{\bf x}_{n_i}), 
q_{i-1}({\bf z}_{n_{i-1}})f_{\nu_{i}}({\bf y}_{[n_{i-1}+1,n_i]}|{\bf y}_{n_{i-1}})d{\bf z}_{n_{i-1}}d{\bf y}_{[n_{i-1}+1,n_i]}).\label{3.270}
\end{align}

For any $n\ge 1$, there exists a closed set
$F_{i-1,n}\subset \mathbb{R}^{n_{i-1}}$ with the Lebesgue measure $|F_{i-1,n}^c|\le n^{-1}$ such that 
the following is continuous on $f_{\nu_{i-1}}^{-1}((0,\infty))\cap F_{i-1,n}$:
for any $\varphi\in C_0(\mathbb{R}^{n_i})$,
\begin{equation}\label{3.26}
{\bf y}_{n_{i-1}}\mapsto \int_{\mathbb{R}^{n_i}}\varphi(x)\pi_{\mu_{i-1},\nu_{i-1}}\otimes\mu_{i|i-1}({\bf y}_{n_{i-1}}, dx).
\end{equation}
Indeed, for any $\varphi\in C_0(\mathbb{R}^{n_i})$,
(\ref{3.26})  is 
measurable on $f_{\nu_{i-1}}^{-1}((0,\infty))$ from 
the assumption of induction.
By Lusin's theorem, for any countable set $S\subset C_0(\mathbb{R}^{n_i})$, there exists a closed set $F_{i-1,n}$ with the Lebesgue measure $|F_{i-1,n}^c|\le n^{-1}$ such that (\ref{3.26}) is continuous on $f_{\nu_{i-1}}^{-1}((0,\infty))\cap F_{i-1,n}$ for all $\varphi\in S$.
The space of continuous functions on a compact subset of a Euclidean space is separable and a Euclidean space is $\sigma$--compact.
\begin{equation}
f_{\nu_{i-1}}^{-1}((0,\infty))\ni {\bf y}_{n_{i-1}}\mapsto q_{i-1}({\bf z}_{n_{i-1}})d{\bf z}_{n_{i-1}}f_{\nu_{i}}({\bf y}_{[n_{i-1}+1,n_i]}|{\bf y}_{n_{i-1}})d{\bf y}_{[n_{i-1}+1,n_i]}
\end{equation}
is weakly continuous from (A1).
\begin{equation}\label{3.31}
f_{\nu_{i-1}}^{-1}((0,\infty))\ni {\bf y}_{n_{i-1}}\mapsto q_{i-1}({\bf z}_{n_{i-1}}) p({\bf x}_{n_i},({\bf y}_{n_{i-1}},{\bf y}_{[n_{i-1}+1,n_i]}))
\end{equation}
is positive and is continuous locally uniformly in $({\bf x}_{n_i},({\bf z}_{n_{i-1}},{\bf y}_{[n_{i-1}+1,n_i]}))$ from (A0, i).
From (\ref{3.270})--(\ref{3.31}) and Lemma \ref{lm3.6.0}, 
\begin{equation}
f_{\nu_{i-1}}^{-1}((0,\infty))\cap F_{i-1,n}\ni{\bf y}_{n_{i-1}}\mapsto q_{i-1}({\bf z}_{n_{i-1}})d{\bf z}_{n_{i-1}}\pi_{\mu_i,\nu_i}({\bf y}_{n_{i-1}},d{\bf x}_{n_i}\hbox{ }d{\bf y}_{[n_{i-1}+1,n_i]})
\end{equation}
is weakly continuous for all $n\ge 1$, which implies that (\ref{3.21}) is 
measurable. 
$F_{1,n}=\mathbb{R}^{d_1}$ above, from Lemma \ref{lm3.3}.
In particular, (\ref{3.21}) is continuous when $i=2$.

If $\varphi({\bf x}_{n_i},({\bf y}_{n_{i-1}},{\bf y}_{[n_{i-1}+1,n_i]}))=\overline \varphi_1({\bf y}_{n_{i-1}})\overline \varphi_2({\bf x}_{n_i},{\bf y}_{[n_{i-1}+1,n_i]})$ for Borel measurable functions $\overline \varphi_1:\mathbb{R}^{n_{i-1}}\rightarrow \mathbb{R},\overline \varphi_2:\mathbb{R}^{n_i}\times \mathbb{R}^{d_i}\rightarrow \mathbb{R}$,
then it is easy to see that (\ref{3.19}) is 
measurable.
The proof of 
measurability of (\ref{3.19})
is easily done by the monotone class theorem and the monotone convergence theorem.

We prove that the following is weakly 
measurable (see (\ref{3.15}) for definition):
$$f_{\nu_{i}}^{-1}((0,\infty))\ni {\bf y}_{n_i}\mapsto \pi_{\mu_{i},\nu_{i}}({\bf y}_{n_i}, dx)\in \mathcal{P}(\mathbb{R}^{n_i}).$$
Let 
\begin{align*}
d (z, f_{\nu_{i}}^{-1}(\{0\})):=&\inf\{|z-w|: f_{\nu_{i}}(w)=0,
w\in\mathbb{R}^{n_i}\},\quad z\in\mathbb{R}^{n_i},\\
U_{-2/n}(f_{\nu_{i}}^{-1}((0,\infty))):=&
\{z\in f_{\nu_{i}}^{-1}((0,\infty)):d (z, f_{\nu_{i}}^{-1}(\{0\}))> 2/n\},\quad n\ge 1.
\end{align*}
Since $f_{\nu_{i}}^{-1}((0,\infty))$ is open from (A1),
$$f_{\nu_{i}}^{-1}((0,\infty))=\bigcup_{n\ge 1}U_{-2/n}(f_{\nu_{i}}^{-1}((0,\infty))).$$
Take probability densities $r_n\in C_0(\mathbb{R}^{d_i};[0,\infty))$ such that 
$r_n(x)=0, |x|\ge n^{-1}$ and that $r_n(x)dx$ weakly converges to a delta measure on $\{0\}\subset\mathbb{R}^{d_i}$, as $n\to\infty$.
Foy any $\varphi \in C_0(\mathbb{R}^{n_i})$ and $n\ge 1$, the following is 
measurable in 
${\bf y}_{n_i}\in U_{-2/n}(f_{\nu_{i}}^{-1}((0,\infty)))$:
\begin{align}\label{3.34}
&
\int_{\mathbb{R}^{n_i}\times \mathbb{R}^{d_i}}\varphi(x)
r_n({\bf y}_{[n_{i-1}+1,n_i]}-y)
\frac{f_{\nu_{i-1}}({\bf y}_{n_{i-1}})}{f_{\nu_{i}}({\bf y}_{n_{i-1}},y)}
\pi_{\mu_i,\nu_i}({\bf y}_{n_{i-1}},dx\hbox{ }dy).
\end{align}
Indeed, for $
{\bf y}_{[n_{i-1}+1,n_i]}\in \cup_{{\bf y}\in \mathbb{R}^{n_{i-1}}}\{
y\in\mathbb{R}^{d_{i}}:({\bf y},y)\in U_{-2/n}(f_{\nu_{i}}^{-1}((0,\infty)))\}$,  (\ref{3.34}) is 
measurable in ${\bf y}_{n_{i-1}}\in
\{y\in\mathbb{R}^{n_{i-1}}:
(y,{\bf y}_{[n_{i-1}+1,n_i]})\in U_{-2/n}(f_{\nu_{i}}^{-1}((0,\infty)))\}$ that is an open subset of $f_{\nu_{i-1}}^{-1}((0,\infty))$ from (A1), since (\ref{3.19}) is 
measurable.
For ${\bf y}_{n_{i-1}}\in \cup_{y\in \mathbb{R}^{d_{i}}}
\{{\bf y}\in\mathbb{R}^{n_{i-1}}:({\bf y},y)\in U_{-2/n}(f_{\nu_{i}}^{-1}((0,\infty)))\}$,
(\ref{3.34}) is also continuous in ${\bf y}_{[n_{i-1}+1,n_i]}\in \{y\in\mathbb{R}^{d_{i}}:
({\bf y}_{n_{i-1}},y)\in U_{-2/n}(f_{\nu_{i}}^{-1}((0,\infty)))\}$, 
from (A1) by the bounded convergence theorem, since the supports of $\varphi$ and $r_n$ are bounded.

For a set $A$, let
$$
1_A(x):=
\begin{cases}
1, &x\in A,\\
0, &x\not\in A.
\end{cases}
$$
The following is 
measurable in ${\bf y}_{n_i}\in f_{\nu_{i}}^{-1}((0,\infty))$ from the discussion above:
\begin{align}\label{3.35}
&1_{U_{-2/n}(f_{\nu_{i}}^{-1}((0,\infty)))}({\bf y}_{n_i})
\int_{\mathbb{R}^{n_i}\times \mathbb{R}^{d_i}}\varphi(x)
r_n({\bf y}_{[n_{i-1}+1,n_i]}-y)
\frac{f_{\nu_{i-1}}({\bf y}_{n_{i-1}})}{f_{\nu_{i}}({\bf y}_{n_{i-1}},y)}
\pi_{\mu_i,\nu_i}({\bf y}_{n_{i-1}},dx\hbox{ }dy).
\end{align}
As $n\to\infty$, by the bounded convergence theorem,
 (\ref{3.35}) converges to the following: for ${\bf y}_{n_i}\in f_{\nu_{i}}^{-1}((0,\infty))$,
 \begin{align}
&\int_{\mathbb{R}^{n_i}}\varphi(x)
\frac{f_{\nu_{i-1}}({\bf y}_{n_{i-1}})}{f_{\nu_{i}}({\bf y}_{n_i})}
\frac{h_i ({\bf y}_{n_i})p_i(x,{\bf y}_{n_i})}{h_i (0,x, {\bf y}_{n_{i-1}})}
\pi_{\mu_{i-1},\nu_{i-1}}\otimes\mu_{i|i-1}({\bf y}_{n_{i-1}},dx)\\
=&\int_{\mathbb{R}^{n_i}}\varphi(x)\pi_{\mu_{i},\nu_{i}}({\bf y}_{n_i},dx)\notag
\end{align}
(see (\ref{2.5}) and (\ref{2.7.1})), which is 
measurable in ${\bf y}_{n_i}\in f_{\nu_{i}}^{-1}((0,\infty))$
as the limit of 
measurable functions.
Indeed, 
$h_i ({\bf y}_{n_{i}})>0,  {\bf y}_{n_i}\in f_{\nu_{i}}^{-1}((0,\infty))$ from (\ref{3.110}), and
$h_i ({\bf y}_{n_{i-1}},\cdot)\in C(\mathbb{R}^{d_i})$ from Lemma \ref{lm3.4}, and
\begin{align*}
h_i (0, x, {\bf y}_{n_{i-1}})
=&\int_{\mathbb{R}^{d_i}}h_i({\bf y}_{n_{i-1}}, y)
p_{i}(x,({\bf y}_{n_{i-1}},y))dy\\
\ge &\int_{\{y\in \mathbb{R}^{d_i}:|y-{\bf y}_{[n_{i-1}+1,n_i]}|<1\}}
h_i({\bf y}_{n_{i-1}}, y)
p_{i}(x,({\bf y}_{n_{i-1}},y))dy, \quad x\in \mathbb{R}^{n_i},
\end{align*}
which is bounded  from below, in $x\in$supp$(\varphi)$, by a positive constant (see (\ref{3.12}) for notation).
The following are also bounded in $x\in$supp$(\varphi)$ for sufficiently large $n\ge 1$, and by the bounded convergence theorem, as $n\to\infty$,
\begin{align}
&\int_{\mathbb{R}^{d_i}}
r_n({\bf y}_{[n_{i-1}+1,n_i]}-y)
\frac{h_i ({\bf y}_{n_{i-1}},y)p_i(x,({\bf y}_{n_{i-1}},y))}
{f_{\nu_{i}}({\bf y}_{n_{i-1}},y)}dy\\
\to &\frac{h_i ({\bf y}_{n_{i-1}},{\bf y}_{[n_{i-1}+1,n_i]})p_i(x,({\bf y}_{n_{i-1}},{\bf y}_{[n_{i-1}+1,n_i]}))}
{f_{\nu_{i}}({\bf y}_{n_{i-1}},{\bf y}_{[n_{i-1}+1,n_i]})},\quad x\in \hbox{supp}(\varphi).\notag
\end{align}
\end{proof}

\section{Proof of main results}\label{sec:4}

In this section, we prove our results.

{\color{black}We briefly explain the idea of the proof.
Most parts of Proposition \ref{pp2.1} can be proven  using the known results.
It is Schr\"odinger's problems and functional equations for conditional distributions.
Proposition \ref{pp2.4} is proven in Lemmas \ref{lm3.2} and \ref{lm3.7}.
We explain the idea of the proof of Theorem \ref{thm2.1}.
For two solutions $h_i$ and  $\overline h_i $ of (\ref{2.10}),
there exists a function $\varphi({\bf y}_{n_{i-1}})$ such that 
$$h_i ({\bf y}_{n_{i}})=\varphi({\bf y}_{n_{i-1}})\overline h_i ({\bf y}_{n_{i}}),
\quad {\bf y}_{n_{i}}=({\bf y}_{n_{i-1}}, {\bf y}_{[n_{i-1}+1,n_i]})\in\mathbb{R}^{n_i},
$$
which implies that for any Borel 
measurable function $\xi: \mathbb{R}^{n_{i-1}}\rightarrow \mathbb{R}^{d_i}$,
\begin{equation}\label{4.1}
\frac{h_i ({\bf y}_{n_{i-1}}, {\bf y}_{[n_{i-1}+1,n_i]})}
{h_i ({\bf y}_{n_{i-1}}, \xi({\bf y}_{n_{i-1}}))}
=\frac{\overline h_i ({\bf y}_{n_{i-1}}, {\bf y}_{[n_{i-1}+1,n_i]})}
{\overline h_i ({\bf y}_{n_{i-1}}, \xi({\bf y}_{n_{i-1}}))}.
\end{equation}
(\ref{4.1}) is uniquely determined by $\xi$ and  is also a solution of (\ref{2.10}), provided the denominators are positive.
If $h_i $ is 
measurable, then so is (\ref{4.1}) even if $\overline h_i $ is not.
It led us to prove the 
measurability of (\ref{4.1}).
The denominators of  (\ref{4.1}) are positive if 
$f_{\nu_i}({\bf y}_{n_{i-1}}, \xi({\bf y}_{n_{i-1}}))>0$ from (\ref{2.10}).
We find a graph of Borel 
measurable function in $f_{\nu_i}^{-1}((0,\infty))$ 
which is $\sigma$--compact.
This is achieved  by the so--called selection lemma in control theory.
We prove the continuity and the 
measurability of (\ref{4.1}) in ${\bf y}_{[n_{i-1}+1,n_i]}$ for each ${\bf y}_{n_{i-1}}$ and 
in ${\bf y}_{n_{i-1}}$ for each ${\bf y}_{[n_{i-1}+1,n_i]}$, respectively.
The rest of the proof is standard once the measurability of $h_i$ is proven.
}

We prove  Proposition \ref{pp2.1}.

\begin{proof}[Proof of Proposition \ref{pp2.1}]
First, we prove the existence of a solution to (\ref{2.10}) that is unique up to a multiplicative function of ${\bf y}_{n_{i-1}}$.
Let $q_{i-1}$ be a positive continuous probability density function on  $\mathbb{R}^{n_{i-1}}$.
For ${\bf y}_{n_{i-1}}\in \mathbb{R}^{n_{i-1}}$ such that $f_{\nu_{i-1}}({\bf y}_{n_{i-1}})>0$,
consider the following SFE:
\begin{align}\label{1.21}
&q_{i-1}({\bf z}_{n_{i-1}})f_{\nu_{i}}({\bf y}_{[n_{i-1}+1,n_i]}|{\bf y}_{n_{i-1}})
d{\bf z}_{n_{i-1}}d{\bf y}_{[n_{i-1}+1,n_i]}\\
=&\tilde h_i({\bf y}_{n_{i-1}},{\bf z}_{n_{i-1}},{\bf y}_{[n_{i-1}+1,n_i]})d{\bf z}_{n_{i-1}}d{\bf y}_{[n_{i-1}+1,n_i]}\notag\\
&\qquad\times \int_{\mathbb{R}^{n_i}}
\frac{q_{i-1}({\bf z}_{n_{i-1}}) p(x,{\bf y}_{[n_{i-1}+1,n_i]}|{\bf y}_{n_{i-1}})
\pi_{\mu_{i-1},\nu_{i-1}}\otimes\mu_{i|i-1}({\bf y}_{n_{i-1}}, dx)}
{\int_{\mathbb{R}^{n_{i-1}}\times \mathbb{R}^{d_i}}\tilde h_i( {\bf y}_{n_{i-1}},z,y)q_{i-1}(z) 
p(x,y|{\bf y}_{n_{i-1}})dzdy}.\notag
\end{align}
Since 
$$\mathbb{R}^{n_i}\times\mathbb{R}^{n_{i-1}}\times \mathbb{R}^{d_i}\ni(x,z,y)
\mapsto q_{i-1}(z) p(x,y|{\bf y}_{n_{i-1}})$$
is positive and continuous,
there exists a  solution $\tilde h_i$ that is unique up to a multiplicative function of ${\bf y}_{n_{i-1}}$ (see 
Theorem \ref{thm1.1} in section \ref{sec:1}). 
Integrating the both sides of  (\ref{1.21}) in ${\bf z}_{n_{i-1}}$, 
\begin{equation}
h_i({\bf y}_{n_{i-1}},{\bf y}_{[n_{i-1}+1,n_i]}):=\int_{\mathbb{R}^{n_{i-1}}}\tilde h_i({\bf y}_{n_{i-1}},z,{\bf y}_{[n_{i-1}+1,n_i]}) q_{i-1}(z) dz
\end{equation}
is a solution to (\ref{2.10}).
A solution to (\ref{2.10}) is also that of (\ref{1.21}).
In particular, (\ref{2.10}) has a solution that is unique up to a multiplicative function of ${\bf y}_{n_{i-1}}$.
From (\ref{2.3.0})--(\ref{2.6.0}),
$\pi_{\mu_i, \nu_i }({\bf y}_{n_{i-1}},d{\bf x}_{n_i}\hbox{ }d{\bf y}_{[n_{i-1}+1,n_i]})$ is the unique minimizer of  (\ref{2.11}), 
$f_{\nu_{i-1}}({\bf y}_{n_{i-1}})d{\bf y}_{n_{i-1}}$--a.e.,
provided it is finite (see \cite{RT93}{\color{black}, Theorem 3}).
\end{proof}

We prove our main result.

\begin{proof}[Proof of Theorem \ref{thm2.1}]

From Lemma \ref{lm3.2},
there exists a continuous solution $h_1$ of (\ref{1.19}) such that
$\pi_{\mu_1,\nu_1}({\bf y}_{n_1}, dx)\in \mathcal{P}(\mathbb{R}^{n_1})$ for  
${\bf y}_{n_1}\in \mathbb{R}^{n_1}$ for which $f_{\nu_{1}}({\bf y}_{n_1})>0$.

From Lemma \ref{lm3.4}, for $i=2,\cdots, k_0$ and  ${\bf y}_{n_{i-1}}\in \mathbb{R}^{n_{i-1}}$ such that $f_{\nu_{i-1}}({\bf y}_{n_{i-1}})>0$,
(\ref{2.10}) has a solution $h_i({\bf y}_{n_{i-1}},\cdot)$ such that $h_i({\bf y}_{n_{i-1}},\cdot)\in C(\mathbb{R}^{d_i})$  and that $\pi_{\mu_i,\nu_i}({\bf y}_{n_i}, dx)\in \mathcal{P}(\mathbb{R}^{n_i})$
for  
${\bf y}_{n_i}\in \mathbb{R}^{n_i}$ for which $f_{\nu_{i}}({\bf y}_{n_i})>0$.
We construct a 
measurable function $\tilde h_i$ such that 
$\tilde h_i({\bf y}_{n_{i-1}},\cdot)\in C(\mathbb{R}^{d_i})$ for ${\bf y}_{n_{i-1}}\in \mathbb{R}^{n_{i-1}}$
and such that 
$$\tilde h_i({\bf y}_{n_{i-1}},\cdot)=h_i({\bf y}_{n_{i-1}},\cdot),\quad d{\bf y}_{n_{i-1}}\hbox{--a.e. on $f_{\nu_{i-1}}^{-1}((0,\infty))$},$$
up to a multiplicative function of ${\bf y}_{n_{i-1}}$ (see (\ref{4.5}) below).

Since $f_{\nu_i}$ is continuous, the set $f_{\nu_i}^{-1}((0,\infty))$ is open and hence is $\sigma$--compact.
In particular, by the selection lemma (see \cite{FR}, p. 199), there exists a Borel 
measurable function
$\xi_i:f_{\nu_{i-1}}^{-1}((0,\infty)) \rightarrow \mathbb{R}^{d_i}$
such that 
\begin{equation}
({\bf y}_{n_{i-1}},\xi_i({\bf y}_{n_{i-1}}))\in f_{\nu_i}^{-1}((0,\infty)),\quad d{\bf y}_{n_{i-1}}\hbox{--a.e. on }f_{\nu_{i-1}}^{-1}((0,\infty)).
\end{equation}
Here, notice that by the continuity of $f_{\nu_i}$,
$$\{{\bf y}_{n_{i-1}}\in \mathbb{R}^{n_{i-1}}:f_{\nu_i}({\bf y}_{n_{i-1}},\cdot)^{-1}((0,\infty))\ne\emptyset\}
=f_{\nu_{i-1}}^{-1}((0,\infty)).$$
We define
\begin{equation}
S_{\nu_{i-1},+}:=\{{\bf y}_{n_{i-1}}\in f_{\nu_{i-1}}^{-1}((0,\infty)): f_{\nu_i}({\bf y}_{n_{i-1}},\xi_i({\bf y}_{n_{i-1}}))>0
\}.
\end{equation}
It is easy to see that the function defined by 
\begin{equation}\label{4.5}
\tilde h_i({\bf y}_{n_{i-1}},{\bf y}_{[n_{i-1}+1,n_i]}):=1_{S_{\nu_{i-1},+}}({\bf y}_{n_{i-1}})\frac{h_i({\bf y}_{n_{i-1}},{\bf y}_{[n_{i-1}+1,n_i]})}{h_i({\bf y}_{n_{i-1}},\xi_i({\bf y}_{n_{i-1}}))}, \quad
{\bf y}_{n_{i}}\in \mathbb{R}^{{n_i}}
\end{equation}
also satisfies  (\ref{2.10}),  $f_{\nu_{i-1}}({\bf y}_{n_{i-1}})d{\bf y}_{n_{i-1}}\hbox{--a.e. on }f_{\nu_{i-1}}^{-1}((0,\infty))$
since 
\begin{align*}
1_{S_{\nu_{i-1},+}}({\bf y}_{n_{i-1}})=&1,\quad d{\bf y}_{n_{i-1}}\hbox{--a.e. on }f_{\nu_{i-1}}^{-1}((0,\infty)),\\
h_i({\bf y}_{n_{i-1}},\xi_i({\bf y}_{n_{i-1}}))>&0,\quad {\bf y}_{n_{i-1}}\in S_{\nu_{i-1},+} \hbox{ (from (\ref{3.110}))}.
\end{align*}
 
For ${\bf y}_{n_{i-1}}\in S_{\nu_{i-1},+}$, 
$\tilde h_i({\bf y}_{n_{i-1}},\cdot)\in C(\mathbb{R}^{d_i})$.
 For ${\bf y}_{n_{i-1}}\not\in S_{\nu_{i-1},+}$, 
$\tilde h_i({\bf y}_{n_{i-1}},\cdot)=0\in C(\mathbb{R}^{d_i})$.
To prove that $\tilde h_i({\bf y}_{n_{i-1}},{\bf y}_{[n_{i-1}+1,n_i]})$ is 
measurable, we prove that for ${\bf y}_{[n_{i-1}+1,n_i]}\in\mathbb{R}^{d_i}$, the following is 
measurable:
\begin{equation}
S_{\nu_{i-1},+}\ni {\bf y}_{n_{i-1}}\mapsto \tilde h({\bf y}_{n_{i-1}},{\bf y}_{[n_{i-1}+1,n_i]})\frac{f_{\nu_i}({\bf y}_{n_{i-1}},\xi_i({\bf y}_{n_{i-1}}))}{f_{\nu_{i-1}}({\bf y}_{n_{i-1}})},
\end{equation}
since 
\begin{align*}
&\{y\in\mathbb{R}^{n_{i-1}}|\tilde h_i(y,{\bf y}_{[n_{i-1}+1,n_i]})\ge r\}=\mathbb{R}^{n_{i-1}},\quad r\le 0,\\
&\{y\in\mathbb{R}^{n_{i-1}}|\tilde h_i(y,{\bf y}_{[n_{i-1}+1,n_i]})\ge r\}\\
=&\left\{y\in S_{\nu_{i-1},+}\biggl| \tilde h(y,{\bf y}_{[n_{i-1}+1,n_i]})\frac{f_{\nu_i}(y,\xi_i(y))}{f_{\nu_{i-1}}(y)}\ge r\frac{f_{\nu_i}(y,\xi_i(y))}{f_{\nu_{i-1}}(y)}\right\},\quad r>0.
\end{align*}
Recall that $f_{\nu_i}(y,\xi_i(y))$ and $f_{\nu_{i-1}}(y)$ are positive  for $y\in S_{\nu_{i-1},+}$
and are Borel measurable from (A1).

Take probability densities $r_n\in C_0(\mathbb{R}^{d_i};[0,\infty))$ such that 
$r_n(x)=0, |x|\ge n^{-1}$ and that $r_n(x)dx$ weakly
converges to a delta measure on $\{0\}\subset\mathbb{R}^{d_i}$ as $n\to\infty$.
From Lemma \ref{lm3.7}, the following is 
measurable in ${\bf y}_{n_{i-1}}$ on $S_{\nu_{i-1},+}$: 
\begin{align}\label{4.31}
&\int_{\mathbb{R}^{n_i}\times \mathbb{R}^{d_i}}
r_n({\bf y}_{[n_{i-1}+1,n_i]}-y)\frac{p_i(x,
({\bf y}_{n_{i-1}},\xi_i({\bf y}_{n_{i-1}})))}{p_i(x,({\bf y}_{n_{i-1}},y))}
{\pi}_{\mu_i,\nu_i}({\bf y}_{n_{i-1}},dx\hbox{ }dy)\\
=&\int_{\mathbb{R}^{n_i}}
\frac{1}{h_i(0, x, {\bf y}_{n_{i-1}})}
\pi_{\mu_{i-1},\nu_{i-1}}\otimes\mu_{i|i-1}({\bf y}_{n_{i-1}},dx)\notag\\
&\quad \times \int_{\mathbb{R}^{d_i}}
r_n({\bf y}_{[n_{i-1}+1,n_i]}-y)\frac{p_i(x,({\bf y}_{n_{i-1}},\xi_i({\bf y}_{n_{i-1}})))}{p_i(x,({\bf y}_{n_{i-1}},y))}
h_i({\bf y}_{n_{i-1}},y)p(x,({\bf y}_{n_{i-1}},y))dy\notag\\
=&\frac{1}{h_i({\bf y}_{n_{i-1}},\xi_i({\bf y}_{n_{i-1}}))}
\int_{\mathbb{R}^{d_i}}
r_n({\bf y}_{[n_{i-1}+1,n_i]}-y)h_i({\bf y}_{n_{i-1}},y)dy\notag\\
&\quad \times \int_{\mathbb{R}^{n_i}}
\frac{h_i({\bf y}_{n_{i-1}},\xi_i({\bf y}_{n_{i-1}}))p_i(x,({\bf y}_{n_{i-1}},\xi_i({\bf y}_{n_{i-1}})))}
{h_i(0, x, {\bf y}_{n_{i-1}})}
\pi_{\mu_{i-1},\nu_{i-1}}\otimes\mu_{i|i-1}({\bf y}_{n_{i-1}},dx)\notag\\
\to &\frac{h_i({\bf y}_{n_{i-1}},{\bf y}_{[n_{i-1}+1,n_i]})}{h_i({\bf y}_{n_{i-1}},\xi_i({\bf y}_{n_{i-1}}))}
\frac{f_{\nu_i}({\bf y}_{n_{i-1}},\xi_i({\bf y}_{n_{i-1}}))}{f_{\nu_{i-1}}({\bf y}_{n_{i-1}})},\quad n\to\infty\notag
\end{align}
from (\ref{3.110}), 
which is 
measurable in ${\bf y}_{n_{i-1}}$ on $S_{\nu_{i-1},+}$ as the limit of 
measurable functions
 (see (\ref{3.12}) and (\ref{2.10})--(\ref{2.7.1}) for notation). 
Indeed, since $h_i({\bf y}_{n_{i-1}},\cdot)$ is continuous, 
$$\sup\{h_i({\bf y}_{n_{i-1}},y)
:|{\bf y}_{[n_{i-1}+1,n_i]}-y|\le 1, y\in \mathbb{R}^{d_i}\}<\infty.$$

{\color{black}From (\ref{1.170}), (\ref{2.8}), (\ref{2.3}),  (\ref{2.6}), (\ref{1.27}), and (\ref{3.12}),  (\ref{2.10.1}) holds.
By induction, $\pi_{\mu_i,\nu_i}\in \mathcal{A}(\mu_i,\nu_i;\pi_{\mu_{i-1},\nu_{i-1}})$ from 
(\ref{2.8}), (\ref{2.5}), (\ref{2.3.1223}), (\ref{2.10}), and (\ref{2.10.1}), since
\begin{align}\label{4.9}
&\pi_{\mu_i,\nu_i}(d{\bf x}_{n_i}\hbox{ }d{\bf y}_{n_i})\notag\\
=&\frac{h_i ({\bf y}_{n_i})p_i({\bf x}_{n_i},{\bf y}_{n_i})}
{h_i(0, {\bf x}_{n_i}, {\bf y}_{n_{i-1}})}d{\bf y}_{[n_{i-1}+1,n_i]}
\pi_{\mu_{i-1},\nu_{i-1}}\otimes\mu_{i|i-1}(d{\bf x}_{n_i}\hbox{ }d{\bf y}_{n_{i-1}})\notag\\
=&f_{\nu_{i-1}}({\bf y}_{n_{i-1}})\frac{h_i ({\bf y}_{n_i})p_i({\bf x}_{n_i},{\bf y}_{n_i})}
{h_i(0, {\bf x}_{n_i}, {\bf y}_{n_{i-1}})}
\pi_{\mu_{i-1},\nu_{i-1}}\otimes\mu_{i|i-1}({\bf y}_{n_{i-1}},d{\bf x}_{n_i})d{\bf y}_i.
\end{align}
}


For any $\pi\in\mathcal{A}(\mu_i,\nu_i;\pi_{\mu_{i-1},\nu_{i-1}})$ such that 
$\pi\ll\pi_{0,i}$, from Remark \ref{rk2.2},
\begin{align}
&H(\pi\parallel\pi_{0,i})=
\int_{\mathbb{R}^{n_i}\times \mathbb{R}^{n_i}}\left\{\log \frac{d\pi}{d\pi_{0,i}}(x,y)\right\}\pi (dx\hbox{ }dy)\\
=&\int_{\mathbb{R}^{n_{i-1}}}f_{\nu_{i-1}}({\bf y}_{n_{i-1}})d{\bf y}_{n_{i-1}}\notag\\
&\qquad \times  H(\pi (d{\bf x}_{n_i}\hbox{ }d{\bf y}_{[n_{i-1}+1,n_i]}|{\bf y}_{n_{i-1}})\parallel\pi_{0,i} (d{\bf x}_{n_i}\hbox{ }d{\bf y}_{[n_{i-1}+1,n_i]}|{\bf y}_{n_{i-1}})),\notag
\end{align}
\begin{align}\label{4.4}
&H(\pi (d{\bf x}_{n_i}\hbox{ }d{\bf y}_{[n_{i-1}+1,n_i]}|{\bf y}_{n_{i-1}})\parallel\pi_{0,i} (d{\bf x}_{n_i}\hbox{ }d{\bf y}_{[n_{i-1}+1,n_i]}|{\bf y}_{n_{i-1}}))\\
\ge&V_i(\pi_{\mu_{i-1},\nu_{i-1}}\otimes\mu_{i|i-1}(d{\bf x}_{n_i}|{\bf y}_{n_{i-1}}),
f_{\nu_i}({\bf y}_{[n_{i-1}+1,n_i]}|{\bf y}_{n_{i-1}})d{\bf y}_{[n_{i-1}+1,n_i]};{\bf y}_{n_{i-1}}),
\notag
\end{align}
$f_{\nu_{i-1}}({\bf y}_{n_{i-1}})d{\bf y}_{n_{i-1}}{\rm -a.e.}$
(see (\ref{1.21.1221}) and (\ref{2.30})).
$\pi_{\mu_i, \nu_i}(d{\bf x}_{n_i}\hbox{ }d{\bf y}_{[n_{i-1}+1,n_i]}|{\bf y}_{n_{i-1}})$ is the unique minimizer of (\ref{4.4}),  $f_{\nu_{i-1}}({\bf y}_{n_{i-1}})d{\bf y}_{n_{i-1}}{\rm -a.e.}$,
provided it is finite, from Proposition \ref{pp2.1} (see (\ref{4.9}) and also Remark \ref{rk2.2.0}).
In particular, $\pi_{\mu_i, \nu_i}$ is the unique minimizer of $V_i(\mu_i, \nu_i;\pi_{\mu_{i-1},\nu_{i-1}})$, provided it is finite.
\end{proof}

\appendix
{\color{black}
\section{Proofs of Example \ref{ex2.1} and Lemma \ref{lm3.1}}

In this section, we give the proofs of Example \ref{ex2.1} and Lemma \ref{lm3.1}.

\begin{proof}[Proof of Example \ref{ex2.1}]
Let
\begin{align*}
\pi_i(d{\bf x}_{n_i}\hbox{ }d{\bf y}_{n_i}):=&\pi_{\mu_{i-1},\nu_{i-1}} \otimes \mu_{i|i-1}(d{\bf x}_{n_i}\hbox{ }d{\bf y}_{n_{i-1}}) f_{\nu_i}({\bf y}_{[n_{i-1}+1,n_i]}|{\bf y}_{n_{i-1}})d{\bf y}_{[n_{i-1}+1,n_i]}.
\end{align*}
Then $\pi_i\in \mathcal{A}(\mu_i,\nu_i;\pi_{\mu_{i-1},\nu_{i-1}})$, and 
\begin{align*}
V_i(\mu_i,\nu_i;\pi_{\mu_{i-1},\nu_{i-1}})
\le &H(\pi_i\parallel\pi_{0,i})\notag\\
=&\mathcal{S}(f_{\nu_i})-
H(f_{\nu_{i-1}}({\bf y}_{n_{i-1}})d{\bf y}_{n_{i-1}}\parallel g_{i-1}(1, {\bf y}_{n_{i-1}})d{\bf y}_{n_{i-1}})
\notag\\
&\qquad  
-\int_{\mathbb{R}^{n_{i-1}}}\{\log g_{i-1}(1, y)\}f_{\nu_{i-1}}(y)dy\notag\\
&\qquad+
\int_{\mathbb{R}^{n_{i-1}}\times\mathbb{R}^{n_{i-1}}}
\left\{\log p_{i-1}(x,y) \right\}\pi_{\mu_{i-1},\nu_{i-1}}(dx\hbox{ }dy)
\notag\\
&\qquad -\int_{\mathbb{R}^{n_i}\times\mathbb{R}^{n_i}}
\left\{\log p_{i}(x,y) \right\}\pi_i(dx\hbox{ }dy)<\infty,\notag
\end{align*}
where $g_{i-1}(1,y):=\prod_{j=1}^{n_{i-1}}g(1,y_j), y=(y_j)_{j=1}^{n_{i-1}}\in \mathbb{R}^{n_{i-1}}$, and integrating (\ref{2.15}) in ${\bf y}_{[n_{i-1}+1,n_i]}$
on $\mathbb{R}^{d_i}$,
$$p_{i-1}(x,y)\le 
C\sqrt{\pi C}^{d_i}, \quad x,y\in\mathbb{R}^{n_{i-1}}.$$
\end{proof}

\begin{proof}[Proof of Lemma \ref{lm3.1}]
We only prove (ii) since (i) can be proven similarly.
If 
\begin{equation}\label{3.4}
\int_{\mathbb{R}^{n_i}}\varphi_i(x)\mu_i(dx)=0,\tag{1}
\end{equation}
then
$\mathcal{I}_i(\varphi_i)\equiv 0$ and $Dom (\mathcal{I}_i(\varphi_i))=\mathbb{R}^{n_i}$.
In particular, $\mathcal{I}_i(\varphi_i)({\bf y}_{n_{i-1}},\cdot)\equiv 0$ and 
$Dom (\mathcal{I}_i(\varphi_i)({\bf y}_{n_{i-1}},\cdot))=\mathbb{R}^{d_i}$.
We consider the case where (\ref{3.4}) does not hold and 
$\mathcal{I}_i(\varphi_i)(y)>0, y\in \mathbb{R}^{n_i}$.
First, we prove that the function defined in the following is convex: for ${\bf y}_{n_{i-1}}\in \mathbb{R}^{n_{i-1}}$,
$$\mathbb{R}^{d_i}\ni y\mapsto \Phi_i({\bf y}_{n_{i-1}}, y;\varphi_i):=\psi_i(y)+\log \mathcal{I}_i(\varphi_i)({\bf y}_{n_{i-1}}, y).$$
Indeed, for $\lambda\in (0,1), y, z\in \mathbb{R}^{d_i}$, from (A0, iii), by H\"older's inequality,
\begin{align*}
&\Phi_i({\bf y}_{n_{i-1}}, \lambda y+(1-\lambda )z;\varphi_i)\\
=&\log \int_{\mathbb{R}^{n_i}}\exp \{\log p_i(x,({\bf y}_{n_{i-1}}, w))+\psi_i(w)\}|_{w=\lambda y+(1-\lambda )z}\varphi_i(x)\mu_i(dx)\notag\\
\le &\log \int_{\mathbb{R}^{n_i}}\exp \{\lambda (\log p_i(x,({\bf y}_{n_{i-1}},y))+\psi_i(y))\notag\\
&\qquad +(1-\lambda )(\log p_i(x,({\bf y}_{n_{i-1}},z))+\psi_i(z))\}\varphi_i(x)\mu_i(dx)\notag\\
\le &\lambda\Phi_i({\bf y}_{n_{i-1}}, y;\varphi_i) 
+(1-\lambda )\Phi_i({\bf y}_{n_{i-1}}, z;\varphi_i),\notag
\end{align*}
since $\varphi_i\ge 0$.
Since  $\mathbb{R}^{d_i}\ni y\mapsto \Phi_i({\bf y}_{n_{i-1}}, y;\varphi_i)$ is convex, 
$Dom (\Phi_i({\bf y}_{n_{i-1}}, \cdot;\varphi_i))$
is a convex set and $\Phi_i({\bf y}_{n_{i-1}}, \cdot;\varphi_i)$ is continuous in the interior of $Dom (\Phi_i({\bf y}_{n_{i-1}}, \cdot;\varphi_i))$ (see e.g., \cite{V1}, p. 52).
Since $\psi_i$ is continuous, $\mathcal{I}_i(\varphi_i)({\bf y}_{n_{i-1}},\cdot)$ 
 is continuous in the interior of the set $Dom (\Phi_i({\bf y}_{n_{i-1}}, \cdot;\varphi_i))=Dom (\mathcal{I}_i(\varphi_i)({\bf y}_{n_{i-1}},\cdot))$.
 \end{proof}
}



\end{document}